\newcommand{\TT}{{\mathcal T}}
\newcommand\ZZ{\mathbb Z}
\let\phi\varphi
\newtheorem{theorem}{Theorem}[section]
\newtheorem{corollary}[theorem]{Corollary}
\newtheorem{lemma}[theorem]{Lemma}
\newtheorem{conjecture}[theorem]{Conjecture}
\begin{document}
\title{Exponentially many nowhere-zero $\ZZ_3$-, $\ZZ_4$-, and $\ZZ_6$-flows%
  \thanks{Preliminary version of this work appears as extended abstract from Euro\-comb 2017, published as~\cite{DMS-ea}.}%
}
\author{%
     Zden\v{e}k Dvo\v{r}\'ak\thanks{Computer Science Institute (CSI) of Charles University,
           Malostransk{\'e} n{\'a}m{\v e}st{\'\i} 25, 118 00 Prague,
           Czech Republic. E-mail: \protect\href{mailto:rakdver@iuuk.mff.cuni.cz}{\protect\nolinkurl{rakdver@iuuk.mff.cuni.cz}}.
           Supported by project 14-19503S (Graph coloring and structure) of Czech Science Foundation.}\and
     Bojan Mohar\thanks{Department of Mathematics, Simon Fraser University, Burnaby, B.C. V5A 1S6.
           E-mail: {\tt mohar@sfu.ca}.  Supported in part by an NSERC Discovery Grant (Canada),
           by the Canada Research Chairs program, and by a Research Grant of ARRS (Slovenia).}\and
     Robert \v{S}\'amal\thanks{Computer Science Institute (CSI) of Charles University,
           Malostransk{\'e} n{\'a}m{\v e}st{\'\i} 25, 118 00 Prague,
           Czech Republic. E-mail: \protect\href{mailto:samal@iuuk.mff.cuni.cz}{\protect\nolinkurl{samal@iuuk.mff.cuni.cz}}.
		   Partially supported by grant 16-19910S of the Czech Science Foundation.
		   Partially supported by grant LL1201 ERC CZ of the Czech Ministry of Education, Youth and Sports.}
}
\date{\today}

\maketitle

\begin{abstract}
  We prove that, in several settings, a graph has exponentially many nowhere-zero flows.
  These results may be seen as a counting alternative to the well-known proofs of existence of $\ZZ_3$-, $\ZZ_4$-, and $\ZZ_6$-flows.
  In the dual setting, proving exponential number of 3-colorings of planar triangle-free graphs is a related open question due to Thomassen.
\end{abstract}

\paragraph{Keywords:}{graphs; nowhere-zero flow; counting} 

\section{Introduction}

Our graphs may have loops and parallel edges.
Let $G$ be a graph with a given orientation of its edges and let~$\Gamma$ be an abelian group.
A mapping $\phi: E(G) \to \Gamma$ is called a \emph{flow} if for every vertex~$v \in V(G)$
it satisfies the Kirchhoff's law
$$
   \sum_{e \in \delta^+(v)} \phi(e) = \sum_{e \in \delta^-(v)} \phi(e)
$$
(here $\delta^+(v)$, $\delta^-(v)$ denote the set of edges directed away from/towards~$v$).
We say that $\phi$ is a $\Gamma$-flow to express the group we are using.
Further, we say that $\phi$ is \emph{nowhere-zero} if $\phi(e) \ne 0$ for every $e \in E(G)$ and we say
that $\phi$ is a \emph{$k$-flow} if $\Gamma = \ZZ$ and $|\phi(e)| < k$ for every edge~$e$.
Note that, while we need some orientation to define a flow, the orientation itself is irrelevant: if we
reverse an arc and change the sign of its flow-value, the Kirchhoff's law still holds true.

The study of nowhere-zero flows was initiated by Tutte~\cite{tutte1, tutte2}. The main motivation was the
following duality theorem.

\begin{theorem}\label{thm:duality}
Let $G$ be a plane graph and let $G^*$ be the dual of~$G$. Then
$G$~has a nowhere-zero $\ZZ_k$-flow if and only if $G^*$ is $k$-colorable.
If $f_k$ is the number of nowhere-zero $\ZZ_k$-flows on~$G$ and $c_k$ the number of $k$-colorings of~$G^*$, then
$c_k = k\cdot f_k$.
\end{theorem}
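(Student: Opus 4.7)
The plan is to establish a $k$-to-$1$ correspondence from $k$-colorings of $G^*$ to nowhere-zero $\mathbb{Z}_k$-flows on $G$, which simultaneously proves both halves of the equivalence and the counting relation $c_k = k \cdot f_k$.

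First I would fix a plane embedding and, for each edge $e$ of $G$, identify its dual $e^*$ together with a sign convention: looking along the orientation of $e$, let $f_L(e)$ and $f_R(e)$ be the faces of $G$ (vertices of $G^*$) on the left and right. Given a map $c : V(G^*) \to \mathbb{Z}_k$, define $\phi_c(e) \colonequals c(f_L(e)) - c(f_R(e))$. The nowhere-zero condition $\phi_c(e) \neq 0$ is literally the condition that $c$ properly colors the endpoints of $e^*$. Kirchhoff's law at a vertex $v \in V(G)$ is checked by walking around $v$ in the rotation given by the embedding: each face incident with $v$ contributes $c(f)$ once with sign $+1$ and once with sign $-1$ in the sum $\sum_{e\in\delta^+(v)}\phi_c(e)-\sum_{e\in\delta^-(v)}\phi_c(e)$, so the total telescopes to $0$. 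Thus $c\mapsto\phi_c$ sends proper $k$-colorings to nowhere-zero $\mathbb{Z}_k$-flows.

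Next I would construct the inverse. Given a nowhere-zero $\mathbb{Z}_k$-flow $\phi$ on $G$, pick any base face $f_0$, set $c(f_0)=0$, and for any face $f$ define $c(f)$ by integrating $\phi$ along a dual walk $W^*$ from $f_0$ to $f$, with sign determined by whether $W^*$ crosses each primal edge left-to-right or right-to-left. Path-independence is exactly the statement that the signed sum of $\phi$ around every closed dual walk is zero; since closed dual walks are generated by the face boundaries of $G^*$, which are precisely the vertex-stars of $G$, this is exactly Kirchhoff's law for $\phi$. Since $\phi$ is nowhere-zero, any two faces sharing an edge receive different colors, so $c$ is a proper $k$-coloring of $G^*$.

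Finally I would record that the map $c\mapsto\phi_c$ has fibers of size exactly $k$: two colorings $c, c'$ yield the same flow iff $c-c'$ is constant on $V(G^*)$ (using connectedness of $G^*$, which may be assumed after handling the trivial disconnected case), and there are exactly $k$ such constants in $\mathbb{Z}_k$. Therefore $c_k = k \cdot f_k$, and in particular $c_k>0 \iff f_k>0$.

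The main obstacle is bookkeeping: setting up the left/right convention so that the telescoping identity at a vertex of $G$ matches the coboundary identity around a face of $G^*$, and cleanly identifying the cycle space of $G^*$ with the cut space of $G$ to get path-independence. A minor point is to dispose of the case where $G$ or $G^*$ is disconnected (handled component-by-component) and of loops/bridges, where "nowhere-zero" forces no bridges and loops force $f_k=0=c_k$ in a compatible way.
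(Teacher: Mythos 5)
The paper does not actually prove this statement: Theorem~\ref{thm:duality} is Tutte's classical duality theorem, quoted with a citation to~\cite{tutte1,tutte2} purely as motivation for the flow conjectures, so there is no in-paper argument to compare yours against. Your proposal is the standard ``face-potential'' (tension) proof of that classical result, and it is essentially correct: the map $c\mapsto\phi_c$ with $\phi_c(e)=c(f_L(e))-c(f_R(e))$, the telescoping check of Kirchhoff's law around each vertex, the inverse by integrating a nowhere-zero flow along dual walks, and the observation that the fibers are exactly the $k$ constant shifts (since the dual of a plane graph is connected) together give both the equivalence and $c_k=k\cdot f_k$.

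Two small points of care. First, your claim that the face boundaries of $G^*$ ``are precisely the vertex-stars of $G$'' is literally true only when $G$ is connected; if $G$ is disconnected, a face of $G^*$ may enclose several vertices of $G$, and its boundary is then the signed sum of their stars --- which still has zero $\phi$-sum by Kirchhoff, so path-independence survives unchanged. In fact you should \emph{not} retreat to a component-by-component treatment for the counting: the components of $G$ share one dual $G^*$ (always connected, via the outer face), and the constant-shift fiber argument applied to this single dual gives $c_k=k\cdot f_k$ directly, whereas multiplying per-component identities would overcount the factor $k$. Second, your remarks about degenerate edges are consistent as stated: a bridge of $G$ dualizes to a loop of $G^*$, making both $f_k$ and $c_k$ zero, and a loop of $G$ dualizes to a bridge of $G^*$, which is harmless on both sides.
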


As a consequence, the questions about chromatic number of planar graphs, that were always at the core of graph theory,
can be studied in a new setting. This line of thought had lead to the following conjectures, motivated by the
Gr\"otzsch Theorem and by the Four Color Theorem (still a conjecture then).

\begin{conjecture}[Tutte~\cite{tutte1, tutte2}]\label{c:flowconj}
\
\begin{itemize}
  \item Every 4-edge-connected graph has a nowhere-zero 3-flow.
  \item Every 2-edge-connected graph with no Petersen minor has a nowhere-zero 4-flow.
  \item Every 2-edge-connected graph has a nowhere-zero 5-flow.
\end{itemize}
\end{conjecture}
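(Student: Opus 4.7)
Since all three parts of Conjecture~\ref{c:flowconj} are famous open problems, any proof plan I sketch is necessarily speculative; the most useful thing I can do is indicate the strategy I would adopt for each statement and where I expect it to break down.

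For the weak 3-flow statement I would work in the equivalent language of modulo-$3$ orientations: a graph has a nowhere-zero $\ZZ_3$-flow if and only if it admits an orientation in which out-degree equals in-degree modulo $3$ at every vertex. Following the scheme of Thomassen and of Lov\'asz--Thomassen--Wu--Zhang, I would strengthen the statement to a \emph{rooted} version that prescribes an arbitrary mod-$3$ demand at a chosen vertex and prove it by induction on $|V(G)|+|E(G)|$, with splittings at small edge-cuts as the main reduction. This approach currently yields the statement under $6$-edge-connectivity; the crux is to control the small cuts created by contractions, which interact badly with the mod-$3$ demand, and closing the gap from $6$ down to $4$ appears to require genuinely new structural insight. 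For the Petersen-minor-free 4-flow statement I would reduce to cubic graphs and appeal to the Robertson--Sanders--Seymour--Thomas theorem that every bridgeless cubic graph with no Petersen minor is $3$-edge-colourable; the hard parts are the structural edge-colouring theorem itself and the fact that the vertex-expansion used in the reduction must be chosen so as not to introduce a Petersen minor. For the 5-flow statement I would start from Seymour's 6-flow theorem and try to refine his $\ZZ_2\oplus\ZZ_3$ decomposition so that the resulting flow avoids one of the six nonzero elements; the obstacle is that the two component flows are governed by rigid parity constraints (T-joins and cycle covers) that no known technique coordinates tightly enough to drop a single value.

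In the spirit of the present paper I would also ask whether a \emph{counting} strategy could be carried through: an exponential lower bound on the number of flows is formally stronger than existence, and such bounds often propagate well through standard reductions. The main obstacle I foresee is that the natural inductive operations (lifting at degree-$4$ vertices, contracting small edge-cuts) can collapse the flow count even when existence is preserved, so any counting attack would need a more robust invariant than the raw flow-count.
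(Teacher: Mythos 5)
You have correctly recognized that the statement in question is Conjecture~\ref{c:flowconj}, i.e.\ Tutte's 3-, 4-, and 5-flow conjectures, which are open; the paper offers no proof of it and cites it only as motivation, with the strongest partial results it uses being Seymour's theorem (Theorem~\ref{thm:S6flow}), Jaeger's theorem (Theorem~\ref{thm:J4flow}), and the Lov\'asz--Thomassen--Wu--Zhang theorem (Theorem~\ref{thm:3flow}). Your survey of attack routes --- mod-3 orientations with a rooted demand and induction over small cuts for the 3-flow statement, reduction of the Petersen-minor-free case to cubic graphs together with the Robertson--Sanders--Seymour--Thomas edge-colouring theorem for the 4-flow statement, and an attempted refinement of Seymour's $\ZZ_2\times\ZZ_3$ decomposition for the 5-flow statement --- is an accurate description of the state of the art, and you correctly identify where each approach breaks down (closing the gap from 6- to 4-edge-connectivity, the structural colouring theorem, and the lack of coordination between the $\ZZ_2$ and $\ZZ_3$ parts).

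To be explicit, though: this is not a proof and does not claim to be one, so there is nothing here to verify against the paper, which itself treats the statement as a conjecture. The genuine gap is the entirety of the argument --- each of the three bullet points remains unproved, and your sketches do not supply the missing structural ideas. Your closing remark about a counting strategy is in the spirit of the paper, which proves exponential counting versions only of the known weaker results ($\ZZ_2\times\ZZ_3$-flows for 3-edge-connected graphs, $\ZZ_2^2$-flows given two disjoint spanning trees, and $\ZZ_3$-flows for 6-edge-connected graphs), and the paper explicitly notes that even the existence statement underlying a counting version of Thomassen's 3-colouring question for 4-edge-connected graphs is Tutte's conjecture itself; so the counting route you mention cannot currently get closer to Conjecture~\ref{c:flowconj} than the existence route does.
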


Tutte proved that, surprisingly, the concept of $\Gamma$-flows does not
depend on the structure of~$\Gamma$, but only on its size.

\begin{lemma}\label{l:indgroup}
  Let $G$ be a graph, $k \ge 2$ an integer and $\Gamma$ any abelian group of
  order~$k$. Then the following are equivalent:
\begin{itemize}
  \item $G$ has a nowhere-zero $k$-flow.
  \item $G$ has a nowhere-zero $\Gamma$-flow.
\end{itemize}
\end{lemma}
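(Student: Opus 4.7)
The plan is to split the lemma into two independent parts: (i)~the existence of a nowhere-zero $\Gamma$-flow depends only on $|\Gamma|$, and (ii)~for $\Gamma = \ZZ_k$, this existence is equivalent to the existence of a nowhere-zero integer $k$-flow.

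For~(i), I let $N_\Gamma(G)$ denote the number of nowhere-zero $\Gamma$-flows on $G$ and establish the deletion-contraction recurrence
\[
N_\Gamma(G) \;=\; N_\Gamma(G/e) - N_\Gamma(G \setminus e)
\]
for any edge $e$ that is not a loop. The key observation is that every $\Gamma$-flow on $G/e$ lifts uniquely to a $\Gamma$-flow on $G$, since the value on $e$ is forced by Kirchhoff's law at either endpoint; splitting these lifts according to whether the value on $e$ is zero yields the recurrence. Combined with the base case $N_\Gamma(G) = (|\Gamma|-1)^{|E(G)|}$ when every edge of $G$ is a loop (loops satisfy Kirchhoff trivially), induction on the number of non-loop edges shows that $N_\Gamma(G)$ is a polynomial in $|\Gamma|$ alone, giving the desired group-independence.

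For~(ii), the direction from integer $k$-flow to $\ZZ_k$-flow is immediate by reduction modulo~$k$. For the converse, given a nowhere-zero $\ZZ_k$-flow $\phi$, I pick any integer lift $\psi: E(G) \to \{-(k-1),\dots,-1,1,\dots,k-1\}$ with $\psi(e) \equiv \phi(e) \pmod{k}$, and modify $\psi$ by ``flip-down'' operations until the Kirchhoff imbalance $b(v)$ vanishes at every vertex. A flip-down of an edge $e$ replaces $\psi(e)$ by $\psi(e) - k$ if $\psi(e)>0$ and by $\psi(e) + k$ if $\psi(e)<0$; this preserves both the mod-$k$ class and the admissible range, while decreasing $b$ at the tail of $e$ by $k$ and increasing $b$ at the head by $k$. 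To drive the imbalances to zero, I build an auxiliary digraph whose arcs are the available flip-down operations (on each edge $e$, in the original direction if $\psi(e)>0$ and in the reverse direction if $\psi(e)<0$), and I show that from any $v$ with $b(v)>0$ there is a directed walk in this digraph to some $u$ with $b(u)<0$; flipping along such a walk strictly decreases $\sum_v |b(v)|$ by $2k$, so iteration terminates at an integer flow.

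The main obstacle is the reachability claim inside (ii). Letting $R$ be the set of vertices reachable from $v$ in the auxiliary digraph, by construction every original edge between $R$ and $V\setminus R$ contributes non-positively to $\sum_{u \in R} b(u)$, so this sum is at most~$0$. Since $b(v)>0$, some other vertex $u \in R$ must satisfy $b(u)<0$, producing the required walk. Once this cut argument is in place, everything else—the polynomial identity in~(i) and the finishing mod-$k$ bookkeeping in~(ii)—is formal.
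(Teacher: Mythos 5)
Your argument is correct, but note that the paper itself gives no proof of this lemma: it is stated as Tutte's classical result and used as a black box, so there is no in-paper argument to compare against. What you have written is essentially Tutte's original two-step proof. Part~(i) is the flow-polynomial argument: the deletion--contraction recurrence $N_\Gamma(G)=N_\Gamma(G/e)-N_\Gamma(G\setminus e)$ for non-loop edges together with the all-loops base case $(|\Gamma|-1)^{|E(G)|}$ shows by induction that the count depends only on $|\Gamma|$, which is stronger than the existence statement the lemma needs. Part~(ii) is the standard equivalence of nowhere-zero $\ZZ_k$-flows and integer $k$-flows: lift modulo $k$ into the range $\{-(k-1),\dots,-1,1,\dots,k-1\}$, observe that all vertex imbalances are multiples of $k$ and sum to zero, and remove them by $\pm k$ adjustments along directed paths; your cut argument for reachability (every edge leaving the reachable set $R$ contributes non-positively to $\sum_{u\in R}b(u)$, so $R$ must contain a vertex of negative imbalance) is exactly the right justification, and each correction step lowers $\sum_v|b(v)|$ by $2k$, so the process terminates in an integer nowhere-zero $k$-flow. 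Two cosmetic points you may wish to tighten: in the recurrence, the bijection should be phrased for nowhere-zero flows of $G/e$ (each lifts to a flow of $G$ that is nonzero off $e$, and you split by whether the forced value on $e$ is zero); and in (ii) replace ``directed walk'' by a simple directed path, so that each edge is flipped at most once and stays in the admissible range. Neither affects correctness.
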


Tutte also proved that the number of nowhere-zero $\Gamma$-flows on~$G$ is equal to
$p_G(|\Gamma|)$ for some polynomial~$p_G$ depending on~$G$. Thus, if the graph is fixed and
we enlarge the group, the number of nowhere-zero flows grows polynomially. In this paper, we will
show that when we do the opposite---keep the same group and enlarge the graph---then
in many cases the number of nowhere-zero flows grows exponentially.

As a consequence of Lemma~\ref{l:indgroup}, when proving the existence of a nowhere-zero $k$-flow, we may instead
work with $\ZZ_k$-flows; for $k=4$, the group $\ZZ_2^2$ is frequently useful.
Tutte's 3-flow, 4-flow, and 5-flow conjectures (Conjecture~\ref{c:flowconj})
are still open. They inspired a lot of partial results, though. For our
work, the following are most relevant:

\begin{theorem}[Seymour~\cite{Seymour-6flow}] \label{thm:S6flow}
Every 2-edge-connected graph has a nowhere-zero $\ZZ_2 \times \ZZ_3$-flow.
\end{theorem}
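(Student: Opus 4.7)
The first step is to unpack what a nowhere-zero $\ZZ_2 \times \ZZ_3$-flow on $G$ really is: a pair $(\phi_2,\phi_3)$ where $\phi_2$ is a $\ZZ_2$-flow, $\phi_3$ is a $\ZZ_3$-flow, and for every edge $e$ at least one of $\phi_2(e),\phi_3(e)$ is nonzero. Since a $\ZZ_2$-flow on $G$ is precisely the indicator function of an \emph{even subgraph} $T \subseteq E(G)$ (a subgraph in which every vertex has even degree), the statement reduces to: every 2-edge-connected $G$ admits an even subgraph $T$ and a $\ZZ_3$-flow $\phi_3$ on $G$ with $\phi_3(e) \neq 0$ for all $e \in E(G) \setminus T$. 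This reformulation turns the group-theoretic statement into a purely combinatorial one about covering the edges by an even subgraph together with the support of a single $\ZZ_3$-flow.

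I would proceed by induction on $|E(G)|$, handling the low-connectivity reductions first. If $G$ has a 2-edge-cut $\{e_1,e_2\}$, I split $G$ along this cut into two smaller 2-edge-connected graphs sharing an identified edge, apply induction to each, and paste the resulting pairs $(T_i,\phi_3^{(i)})$ together along the shared edge by matching the two $\ZZ_3$-values and the two parities. Thus I may assume $G$ is 3-edge-connected. Using Mader's splitting-off lemma, one can further split pairs of edges at any vertex of degree $\geq 4$ while maintaining 3-edge-connectivity, reducing to the case where $G$ is cubic.

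For a cubic 3-edge-connected $G$, Petersen's theorem provides a perfect matching $M$, and its complement $H = E(G) \setminus M$ is a 2-factor, hence an even subgraph. Take $T = H$; what remains is to find a $\ZZ_3$-flow $\phi_3$ that is nowhere zero on $M$. At each vertex $v$, two edges of $H$ and one edge of $M$ meet, so Kirchhoff forces $\phi_3$ on the matching edge at $v$ to equal minus the sum of the values on the two $H$-edges at $v$. Choosing an orientation of each cycle of $H$ and a label in $\{1,2\}$ for each cycle therefore determines $\phi_3$ on all of $G$, and the problem becomes: choose these labels so that no matching edge is assigned value $0$.

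The heart of the argument, and the step I expect to be the main obstacle, is precisely this coordination: a matching edge $uv$ is forced to $0$ exactly when the contributions from its two endpoints' cycles cancel, and adjusting one cycle's label may create a new conflict elsewhere. I would attack this by encoding the constraints in an auxiliary structure on the cycles of $H$ (roughly, a signed graph or constraint hypergraph whose vertices are the cycles and whose hyperedges are the matching edges), and show that the 3-edge-connectivity of $G$ rules out unavoidable conflicts; when a local obstruction still arises, one repairs it by swapping a matching edge into $T$ (toggling along an appropriate even subgraph of $G$ to keep $T$ even) so as to eliminate the bad edge from the constraint. A cleaner variant, which I believe actually underlies Seymour's original argument, avoids the cubic reduction altogether and builds $T$ and $\phi_3$ simultaneously by induction along an ear decomposition, adding each new ear either to $T$ or to the support of $\phi_3$ according to parity.
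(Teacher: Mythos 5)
Your reductions are fine and match the standard route (the paper handles the $2$-edge-cut case by contracting an edge of the cut, as in Corollary~\ref{cor-over}, and uses Mader's lemma via Corollary~\ref{cor-mader2}), but the heart of your argument --- the cubic $3$-edge-connected case --- has a genuine gap, and in fact the key step as stated is false. First, the mechanism is backwards: if you give each cycle of the $2$-factor $H$ a constant label in $\{1,2\}$ (a circulation around the oriented cycle), then at every vertex the two $H$-edges already balance, so Kirchhoff forces the matching edge to carry value $0$, not a nonzero value. What your setup really produces is a constraint system: choosing values $x_e\in\{1,2\}$ on the matching edges, the signed sum of the $x_e$ around each $H$-cycle must vanish mod $3$; equivalently, you need a nowhere-zero $\ZZ_3$-flow on the auxiliary multigraph $Q$ whose vertices are the cycles of $H$ and whose edges are the matching edges joining distinct cycles. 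Second, such a flow need not exist, so ``take $T$ to be any $2$-factor'' fails: let $G$ be $K_4$ with every vertex expanded into a triangle. This is cubic and $3$-edge-connected, the four triangles form a $2$-factor $H$, and $Q\cong K_4$, which has no nowhere-zero $\ZZ_3$-flow; hence no $\ZZ_3$-flow of $G$ is nonzero on that perfect matching. So the even subgraph must be chosen with much more structure than an arbitrary $2$-factor, and your proposed repair (``swap a matching edge into $T$'') is exactly where the whole difficulty lives; as described it is only a plan, not a proof.

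This is precisely what Seymour's argument, reproduced in quantitative form in Section~\ref{sec:6flow} of the paper, supplies. Instead of a $2$-factor one builds an \emph{anchored chain cover} (Lemma~\ref{lemma-cover}): starting from a cycle, one repeatedly attaches a chain $C_i$ that meets the previous part through exactly two anchor edges, which is where $3$-edge-connectivity is used (via a leaf $2$-edge-connected component of the uncovered part). The $\ZZ_2$-part is put on the chains, and the $\ZZ_3$-part is then constructed greedily, processing the chains in reverse order and pushing a correction around a directed cycle through the two anchors of $C_i$ so that the anchors and external edges become nonzero without disturbing what was already fixed (Lemma~\ref{lemma-count}). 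Your closing suggestion of building $T$ and $\phi_3$ along an ear decomposition gestures toward this, but a general ear decomposition does not give the crucial property that each added piece is a chain attached by exactly two edges, and without that the $\ZZ_3$-corrections cannot be routed; so that variant, too, is missing the essential idea.
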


\begin{theorem}[Jaeger~\cite{Jaeger-4flow}] \label{thm:J4flow}
Every 4-edge-connected graph has a nowhere-zero $\ZZ_2 \times \ZZ_2$-flow.
\end{theorem}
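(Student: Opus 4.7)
The plan is to invoke the Nash-Williams--Tutte tree-packing theorem, which guarantees that every $2k$-edge-connected graph contains $k$ pairwise edge-disjoint spanning trees. Specialized to $k=2$, it asserts that our 4-edge-connected graph $G$ contains two edge-disjoint spanning trees $T_1$ and $T_2$. This reduces the theorem to a purely linear-algebraic construction over $\ZZ_2$.

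Given such trees, I would construct two $\ZZ_2$-flows $\phi_1,\phi_2$ with the property that $\phi_i$ is nonzero on every edge of the cotree $E(G)\setminus E(T_i)$. Recall that the $\ZZ_2$-flows on $G$ form the cycle space, and that for each edge $e\notin E(T_i)$ the fundamental cycle $C(e,T_i)$ of $e$ with respect to $T_i$ gives a $\ZZ_2$-flow whose only cotree-edge in its support is $e$ itself. Summing these fundamental-cycle flows over all $e\notin E(T_i)$ modulo $2$ yields a $\ZZ_2$-flow $\phi_i$ that takes value $1$ on every cotree edge (and some values, possibly zero, on the tree edges of $T_i$).

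Finally, I would combine the pair into a single $\ZZ_2\times\ZZ_2$-flow $\phi:=(\phi_1,\phi_2)$. For any $e\in E(G)$, edge-disjointness of $T_1$ and $T_2$ guarantees that $e$ lies in the cotree of at least one of them; hence at least one of $\phi_1(e),\phi_2(e)$ equals $1$, making $\phi$ nowhere-zero. By Lemma~\ref{l:indgroup}, this is equivalent to a nowhere-zero $4$-flow, as desired.

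The only nontrivial input is the tree-packing theorem; once $T_1$ and $T_2$ are in hand, everything else is bookkeeping in the cycle space. Thus I would expect the ``hard step'' to be the Nash-Williams--Tutte theorem itself, which I would invoke as a black box rather than reprove. One small subtlety worth flagging: a 4-edge-connected graph is $(2,2)$-edge-connected in the partition sense required by Nash-Williams--Tutte, so the hypothesis is genuinely enough; no further reduction (e.g., to bridgeless or cubic graphs) is required.
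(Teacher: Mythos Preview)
Your proposal is correct and is precisely Jaeger's original argument, which the paper cites rather than proves in full; the paper explicitly takes this same construction (the canonical $\ZZ_2$-flow with respect to each of two edge-disjoint spanning trees, obtained via Nash--Williams--Tutte) as its point of departure in Section~\ref{sec:4flows}. Nothing needs to be added or changed.
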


\begin{theorem}[Lov\'asz, Thomassen, Wu, and Zhang~\cite{ltwz}] \label{thm:3flow}
  \ \\
Every 6-edge-con\-nected graph has a nowhere-zero $\ZZ_3$-flow.
\end{theorem}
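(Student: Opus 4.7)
The standard first step is to reformulate in terms of orientations: a nowhere-zero $\ZZ_3$-flow on $G$ uses only the values $\pm 1$, so, relative to a fixed reference orientation, it is equivalent to choosing a re-orientation of $G$ in which $d^+(v) \equiv d^-(v) \pmod 3$ at every vertex. Call such an orientation a \emph{mod-3 orientation}. It then suffices to show that every 6-edge-connected graph admits a mod-3 orientation.

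To run an induction, I would strengthen the claim to allow an arbitrary ``boundary.'' For $\beta: V(G) \to \ZZ_3$ with $\sum_{v \in V(G)} \beta(v) \equiv 0 \pmod 3$, call an orientation a \emph{$\beta$-orientation} if $d^+(v) - d^-(v) \equiv \beta(v) \pmod 3$ for every $v$. The plan is to prove, by induction on $|V(G)|+|E(G)|$, that every 6-edge-connected graph admits a $\beta$-orientation for every such $\beta$; the theorem is the case $\beta \equiv 0$. This strengthening is necessary because the natural inductive reductions produce nontrivial boundaries at merged vertices, even when starting from $\beta\equiv 0$.

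The inductive step uses two complementary reductions. The first is a Mader-type splitting-off at a vertex $v$ of degree at least~$6$: replace two edges $uv, wv$ by a single edge $uw$, preserving 6-edge-connectivity away from~$v$; apply the induction hypothesis to obtain a $\beta'$-orientation of the smaller graph, and then lift the orientation of $uw$ back to orientations of $uv, wv$ consistent with $\beta(v)$. The second is contraction of a proper nonempty $X \subsetneq V(G)$ whose edge boundary is ``tight'' (size exactly~$6$): identify $X$ to a single vertex $x^*$, update $\beta$ by setting $\beta'(x^*)$ to the sum of the $\beta$-values in $X$ corrected by the cut, apply induction to each side, and merge the two orientations along the cut.

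The main obstacle---and the real content of the theorem---is the tight-cut analysis. One has to show that whenever no admissible splitting pair is available at a high-degree vertex, the family of obstructing tight cuts through that vertex has enough combinatorial structure (via crossing/uncrossing arguments) to expose a contractible set $X$ to which the second reduction applies. In addition, one must verify that the updated $\beta'$ still lies in the compatible class $\sum\beta'=0$ and that the reduced graph remains 6-edge-connected. Carrying this tight-cut bookkeeping through---and checking that the edge-connectivity threshold $6$ is always just enough to guarantee at least one of the two reductions applies---is the technical crux of the argument.
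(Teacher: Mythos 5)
There is a genuine gap: what you have written is a strategy outline, not a proof, and the step you defer (``the tight-cut analysis \dots\ is the technical crux'') is in fact the entire content of the theorem. Note first that the paper itself does not prove this statement; it is imported wholesale from Lov\'asz, Thomassen, Wu and Zhang~\cite{ltwz}, and the result (the weak $3$-flow conjecture, open for decades until Thomassen's breakthrough, then sharpened from $8$- to $6$-edge-connectivity in~\cite{ltwz}) is far beyond a routine induction. Your first two moves --- recasting nowhere-zero $\ZZ_3$-flows as mod-$3$ orientations and strengthening to arbitrary zero-sum boundaries $\beta$ --- do match the known approach. But the induction you then set up (plain $6$-edge-connectivity, reductions by splitting-off and by contracting tight cuts) does not close as stated, and it is exactly at the point you wave at (``crossing/uncrossing arguments \dots\ expose a contractible set'') that the difficulty lives: contracting one side of a tight cut creates a merged vertex of large degree whose entire incident orientation must be prescribed when you treat the other side, and splitting off at a degree-$6$ vertex drops its degree below $6$, so uniform $6$-edge-connectivity is not preserved in a usable form. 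No uncrossing argument at the level of generality you describe is known to repair this.

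The actual proof works with a substantially stronger induction hypothesis, which this paper quotes as Theorem~\ref{thm-extend}: a distinguished vertex $v$ whose incident edges are already oriented and whose degree is bounded by $\sigma(\{v\})$, together with a \emph{localized} cut condition $\deg(X)\ge\sigma(X)$ for all $X\ni v$ with $|X|\ge 2$, where $\sigma(X)\in\{4,5,6,7\}$ depends on $\beta(X)$ and on the parity of $\deg(X)$. It is this finer, boundary- and parity-sensitive hypothesis (not ``$6$-edge-connected $\Rightarrow$ $\beta$-orientable'') that survives the contraction and lifting steps and makes the induction self-sustaining; designing it is the main achievement of~\cite{ltwz}, building on Thomassen~\cite{Thomassen-3flow}. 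So your proposal identifies the right reformulation but leaves the theorem itself unproved; to make it a proof you would essentially have to reconstruct Theorem~\ref{thm-extend}, or else cite~\cite{ltwz} as the paper does.
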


In this paper we revisit the above three theorems with a new point of view: counting.
We were motivated to this by several recent results and conjectures.
We start with the celebrated proof of Lov\'asz--Plummer conjecture by Esperet et al.~\cite{EKKKN-expmatch}:
every bridgeless cubic graph has exponentially many perfect matchings.
More directly related is the result of Thomassen: every planar graph has
exponentially many (list) 5-colorings. Thomassen~\cite{Thom-exp3} also asked the same question for 3-colorings
of triangle-free planar graphs. He gave a subexponential bound that was later improved by Asadi et al.~\cite{ADPT-exp3}.
However, the conjecture stays open.
By duality, these results and conjecture can be equivalently stated for the number of nowhere-zero
$\ZZ_5$-flows (or $\ZZ_3$-flows) of planar (4-edge-connected) graphs.
Perhaps, in the spirit of Conjecture~\ref{c:flowconj}, planarity is not required.
We do not attempt to prove Thomassen's conjecture here, much the less its strengthening; one particular issue is that
proving existence of just one $\ZZ_3$-flow for every 4-edge-connected graph is Tutte's
4-flow conjecture.
Still, we hope that our results may serve as an inspiration for others working on this and related conjectures.

In the rest, we will let $n$ be the number of vertices and $m$ the number of edges of
the studied graph.

We count nowhere-zero $\ZZ_2\times \ZZ_3$-flows in Section~\ref{sec:6flow}. Obviously,
2-edge-connectivity is not sufficient, as a long cycle still has just five nowhere-zero
$\ZZ_2\times\ZZ_3$-flows. We provide two bounds for the number of nowhere-zero flows: $2^{n/7}$ for 3-edge-connected graphs
and $2^{2(m-n)/9}$ for 2-edge-connected ones.
An $n$-cycle with $m-n$ edges doubled has $5 \cdot 2^{2(m-n)}$ nowhere-zero $\ZZ_2\times\ZZ_3$-flows---thus, the
second result is optimal, up to possibly increasing the $2/9$-factor in the exponent.
The $n$-vertex cycle with all edges doubled ($m=2n$) is $4$-edge-connected,
giving an upper bound~$5\cdot 4^n$, showing near optimality of the first result.
Note, however, that we did not attempt to provide the best possible bounds.

In Section~\ref{sec:4flows}, we count nowhere-zero $\ZZ_2^2$-flows, proving the existence of
$2^{n/250}$ of them in any 4-edge-connected graph (any snark shows that 3-edge-connectivity is not
sufficient even for existence of one nowhere-zero $\ZZ_2^2$-flow).
As in Jaeger's proof of Theorem~\ref{thm:J4flow} we need 4-edge-connectivity only to get the existence
of two edge-disjoint spanning trees. To make this explicit, we prove the existence of many flows just
assuming the existence of two edge-disjoint spanning trees.

Finally, in Section~\ref{sec:3flows} we count nowhere-zero $\ZZ_3$-flows.
Unlike the previous sections, where we prove everything from first principles, here we
rely on the result of L.\,M.\,Lov\'asz et al.~\cite{ltwz}, which in turn is based on the 
of Thomassen~\cite{Thomassen-3flow}. 
We prove existence of $2^{(n-2)/12}$ nowhere-zero $\ZZ_3$-flows.

We finish the introduction by recalling a tool that we will use frequently.
\emph{Lifting} a pair of edges $e_1$ and $e_2$ incident with the same vertex $v$ in $G$ means replacing them by
one edge joining their other end-vertices (and deleting $v$ if there are no edges incident with $v$ left).  A pair of edges $e_1$ and $e_2$ incident with the same vertex $v$ in a graph $G$
is \emph{splittable} if for all $s,t\in V(G-v)$, the graph obtained from $G$ by lifting $e_1$ and $e_2$ contains
the same maximum number of pairwise edge-disjoint paths from $s$ to $t$ as $G$ does.  Mader~\cite{mader} proved the following
result, frequently called splitting lemma.

\begin{lemma}[Mader~\cite{mader}]\label{lemma-mader}
If a vertex $v$ has degree either $2$ or at least $4$ and $v$ is not incident with a cut-edge, then $v$~is incident with a splittable pair of edges.
\end{lemma}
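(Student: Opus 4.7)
The plan is to handle the two cases $\deg(v)=2$ and $\deg(v)\ge4$ separately. If $\deg(v)=2$ with edges $e_1=vu$ and $e_2=vw$, every walk through $v$ must traverse both $e_1$ and $e_2$, so lifting induces a bijection between edge-disjoint $s$-$t$ path packings in $G$ and in the lifted graph for every $s,t\in V(G-v)$; the unique pair at $v$ is therefore splittable. The substantive case is $\deg(v)\ge4$, which I would attack by contradiction.

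Assume every pair of edges at $v$ is non-splittable. Writing $d(S):=|E(S,V\setminus S)|$, one checks by a four-way case analysis that lifting $e_1=vx_1$ and $e_2=vx_2$ drops $d(S)$ by exactly $2$ when $v$ and $\{x_1,x_2\}$ lie on opposite sides of the cut $(S,V\setminus S)$, and leaves $d(S)$ unchanged otherwise. Hence non-splittability of $\{e_1,e_2\}$ yields a \emph{dangerous set} $X\subseteq V(G)\setminus\{v\}$ with $x_1,x_2\in X$, along with terminals $s,t\in V(G-v)$ separated by $X$ (with $v\notin X\cup\{t\}$ or $v\notin X\cup\{s\}$), such that $d(X)$ equals the maximum number of edge-disjoint $s$-$t$ paths in $G$. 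Comparing $d(X)$ with $d(X\cup\{v\})$ and invoking the minimality of the cut then gives the quantitative constraint $|E(v,X)|\le\deg(v)/2$ for every dangerous set.

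The heart of the plan is an uncrossing argument. For each pair $\{e_i,e_j\}$ of edges at $v$, fix a dangerous set $X_{ij}$. Given two such sets $X,Y$ whose ``other endpoints'' of $v$ overlap, apply the submodular inequality $d(A)+d(B)\ge d(A\cap B)+d(A\cup B)$ together with its posimodular partner $d(A)+d(B)\ge d(A\setminus B)+d(B\setminus A)$, and use the minimality of the cuts realizing $d(X),d(Y)$, to replace $X,Y$ by dangerous sets better aligned with $v$'s neighborhood. Iterating, one eventually produces a dangerous set $X^*$ containing strictly more than $\deg(v)/2$ edges to $v$ (contradicting the quantitative bound) or else corners the configuration into one where the last neighbor of $v$ not absorbed into $X^*$ is joined to $v$ by an edge forced to be a cut-edge, contradicting the hypothesis.

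The main obstacle is the uncrossing step itself. Since the sets $X,Y$ are minimum cuts for \emph{different} terminal pairs, the combined sets $X\cap Y$, $X\cup Y$, $X\setminus Y$, $Y\setminus X$ are \emph{a priori} only cuts, not minimum cuts for any specific pair; one must pick the right combination at each step and then reselect terminals from the new set so that minimality, and hence dangerousness, is inherited. Orchestrating this reselection while also tracking how the intersection of the new set with $N(v)$ evolves is the standard and most delicate component of splitting-off arguments, and is the reason a careful treatment requires the full strength of Menger's theorem plus the structure theory of minimum cuts.
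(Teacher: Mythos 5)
The paper offers no proof of this lemma at all---it is imported verbatim from Mader~\cite{mader}---so your argument has to stand on its own, and it does not yet do so. The degree-$2$ case is fine, and your opening reduction is the right one: by Menger, non-splittability of $\{e_1,e_2\}$ yields a set $X\subseteq V(G)\setminus\{v\}$ with $x_1,x_2\in X$ whose cut value is too small to survive a drop of $2$. But the heart of the proof is exactly the step you defer: the uncrossing argument is described as a plan (``apply submodularity and posimodularity, reselect terminals, iterate'') rather than executed. That step is the entire content of Mader's theorem. One must show that dangerousness is inherited by a suitable one of $X\cap Y$, $X\cup Y$, $X\setminus Y$, $Y\setminus X$ for a re-chosen terminal pair, that the iteration terminates, and that the terminal configuration really forces either a violation of the quantitative bound or a cut-edge at $v$; this is also the only place where the hypotheses $\deg(v)\neq 3$ and ``$v$ is not incident with a cut-edge'' can enter, and your sketch never shows how they do. You acknowledge this yourself in the final paragraph, which is candid but confirms that what you have is a statement of intent, not a proof.

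Two details of the setup also need repair before the plan could be carried out. First, a witness to non-splittability is a set $X$ separating some $s,t\in V(G-v)$ with $d(X)\le\lambda_G(s,t)+1$, not necessarily $d(X)=\lambda_G(s,t)$ as you assert: a cut of value $\lambda+1$ that drops by $2$ under lifting also destroys the $s$--$t$ connectivity, and the standard notion of a dangerous set must include it, which affects all later counting. Second, comparing $d(X)$ with $d(X\cup\{v\})=d(X)+\deg(v)-2|E(v,X)|$ and using $d(X)\le\lambda+1$, $d(X\cup\{v\})\ge\lambda$ gives only $|E(v,X)|\le(\deg(v)+1)/2$, not $\deg(v)/2$; the sharper bound you rely on for the final contradiction has to be argued separately (or the contradiction restructured). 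So: the easy half is correct, the framework is the standard one, but the core of the lemma remains unproved.
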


We will also use the following easy corollary.

\begin{corollary}\label{cor-mader2}
Let $G$ be a $k$-edge-connected graph (for $k \ge 2$) and let $v$ be a vertex of~$G$ of degree at least~$k+2$. Then there is pair of splittable edges incident with~$v$
such that after lifting them, the resulting graph~$G'$ is again $k$-edge-connected.
\end{corollary}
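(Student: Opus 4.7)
My plan is to apply Mader's splitting lemma (Lemma~\ref{lemma-mader}) to obtain a splittable pair at $v$, and then verify that lifting \emph{any} such pair automatically preserves $k$-edge-connectivity; no extra choice is needed. Mader's lemma is available because $k\ge 2$ precludes cut-edges in $G$, while $\deg_G(v)\ge k+2\ge 4$ supplies the degree hypothesis. Let $(e_1,e_2)$ be a splittable pair at $v$, let $G'$ be the graph produced by lifting them, and note that $\deg_{G'}(v)=\deg_G(v)-2\ge k$, so $v$ is still present in $G'$ (it is not deleted by the lifting operation).

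To check $k$-edge-connectivity of $G'$, I would take an arbitrary edge cut $[S,\overline{S}]$ of $G'$ and show its size is at least $k$. Assume without loss of generality that $v\in S$. The case $S=\{v\}$ is immediate: the cut consists of all edges incident to $v$ in $G'$, and there are $\deg_{G'}(v)\ge k$ of them. Otherwise, pick any $s\in S\setminus\{v\}$ and any $t\in\overline{S}$; the same bipartition still separates $s$ from $t$ in $G'$, so by Menger's theorem the cut size is at least the maximum number of pairwise edge-disjoint $s$-$t$ paths in $G'$. Splittability of $(e_1,e_2)$, applied to $s,t\in V(G-v)$, equates this to the corresponding quantity in $G$, which is at least $k$ since $G$ is $k$-edge-connected.

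The only subtle point—and the reason the hypothesis demands $\deg_G(v)\ge k+2$ rather than merely $\deg_G(v)\ge k$—is that splittability, as defined in the excerpt, says nothing about connectivities involving $v$ itself, so the cut separating $\{v\}$ from the rest of $G'$ has to be handled separately by the degree bound. Beyond recognizing this dichotomy, I foresee no serious obstacle.
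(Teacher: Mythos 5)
Your proof is correct and follows essentially the same route as the paper: take a splittable pair at $v$ via Mader's lemma, rule out small cuts separating two vertices distinct from $v$ using splittability (Menger), and handle the cut around $\{v\}$ itself by the degree bound $\deg_{G'}(v)\ge k+2-2=k$. The explicit invocation of Menger's theorem is merely a more detailed spelling-out of what the paper leaves implicit.
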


\begin{proof}
  Suppose a nonempty $X \subset V(G')$ has less than $k$ edges leaving it in~$G'$. If there is $s \in X$, $t \in V(G')\setminus X$ such that $s \ne v \ne t$, then
  we get a contradiction with the definition of a splittable pair. Thus we may assume that $X = \{v\}$. But $\deg_{G'} v \ge k+2-2 = k$, which completes the proof.
\end{proof}

\section{$\ZZ_2\times \ZZ_3$-flows}\label{sec:6flow}

Our starting point is Seymour's proof of Theorem~\ref{thm:S6flow} from \cite{Seymour-6flow}. With careful counting added, this gives our
Lemma~\ref{lemma-count}. However, more work is needed if the graph has few edges.
Moreover, Seymour starts by reducing to a cubic graph, which simplifies the proof.
We cannot do this here, as the reductions change the size of the graph---thus we need to control how the number of flows changes
by the reduction. We will have vertices of arbitrary degree in the considered graph, and to handle this, we need the following definitions.

A \emph{$(u,v)$-chain} (or a \emph{chain} from $u$ to $v$) is a graph obtained from a path with ends $u$ and $v$ by doubling all edges
and possibly subdividing some of them.  It is easy to see that if a graph $G$ contains two edge-disjoint paths from $u$ to $v$
and $G$ is subgraph-minimal subject to this property, then $G$ is a $(u,v)$-chain.  A single vertex~$u$ is also considered to be a chain, namely a $(u,u)$-chain.

An \emph{anchored chain cover} of a graph $G$ consists of vertex-disjoint chains $C_1$, \ldots, $C_k$, where each $C_i$ is joining
vertices $u_i$ and $v_i$ ($i=1,\ldots, k$), such that $V(G)=V(C_1\cup \cdots\cup C_k)$, $C_1$ is a cycle, and for each $i=2,\ldots,k$,
$G$ contains two distinct edges $u_ix_i$ and $v_iy_i$ with $x_i,y_i\in V(C_1\cup\cdots\cup C_{i-1})$.
The edges $u_ix_i$ and $v_iy_i$ are called \emph{anchors} of $C_i$ and are considered being part of the anchored chain cover.
The edges of $G$ not belonging to the chains
of the cover and not equal to the anchors are called \emph{external}.

\begin{lemma}\label{lemma-cover}
Every $3$-edge-connected graph $G$ contains an anchored chain cover.
\end{lemma}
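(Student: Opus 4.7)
My approach is an iterative (greedy) construction. Start with $C_1$ being any cycle in $G$ (one exists since a $3$-edge-connected graph is $2$-edge-connected). Having built $C_1, \ldots, C_{i-1}$ covering the vertex set $V_{i-1}$, I would find a chain $C_i$ lying inside $G[W]$ (with $W = V(G) \setminus V_{i-1}$) together with two distinct anchor edges from the endpoints of $C_i$ to $V_{i-1}$, and iterate until $V_{i-1} = V(G)$. Termination is automatic since each step adds at least one vertex to $V_{i-1}$.

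The whole proof thus reduces to the following local step: given any non-empty proper $V' \subsetneq V(G)$, find a chain in $G[W]$ together with two distinct anchors from it to $V'$. To do this I would contract $V'$ to a single vertex $v^*$, obtaining a $3$-edge-connected graph $G'$. If some $u \in W$ has at least two edges to $V'$ in $G$, then use the single-vertex chain $C_i = \{u\}$ with two of those edges as anchors. Otherwise, each vertex in $W$ has at most one edge to $V'$, so the set $N = \{w \in W : wv^* \in E(G')\}$ of neighbors of $v^*$ satisfies $|N| \geq 3$, since $\deg_{G'}(v^*)\geq 3$.

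In this remaining case I would analyze the bridge-tree of $G'[W]$: partition $V(W)$ into its maximal $2$-edge-connected components (equivalence classes under ``joined by two edge-disjoint paths in $G'[W]$''), with bridges of $G'[W]$ joining distinct components to form a forest. Pick a leaf component $P$ of that forest; it has at most one bridge to the rest of $G'[W]$, and the cut in $G'$ separating $V(P)$ from its complement consists of that bridge together with the $|N \cap V(P)|$ edges from $V(P)$ to $v^*$, so $3$-edge-connectivity forces $|N \cap V(P)| \geq 2$. A separate short check using the minimum degree bound rules out the degenerate case of a single-vertex leaf (such a vertex would have degree at most $2$ in $G'$). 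Now take two distinct $u, v \in N \cap V(P)$; since $P$ is $2$-edge-connected there are two edge-disjoint $u$-$v$ paths inside $P \subseteq G[W]$, and a subgraph-minimal subgraph of $P$ containing them is a $(u,v)$-chain by the fact cited just before the lemma. The edges $uv^*, vv^*$ un-contract to two distinct edges from $u, v$ to $V'$ in $G$, which serve as the required anchors.

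The main obstacle is the bridge-tree bookkeeping in the second case, in particular ensuring that at least one leaf component meets $N$ in two vertices; the argument needs mild extra care when $G'[W]$ is disconnected (apply it to each connected component, each of which still has at least three neighbors of $v^*$ by the same $3$-edge-connectivity cut argument) and when leaf components are as small as two vertices joined by a multi-edge.
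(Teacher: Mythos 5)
Your proposal is correct and follows essentially the same route as the paper: both iterate by choosing a leaf $2$-edge-connected component of the uncovered graph, use $3$-edge-connectivity (at most one bridge leaves that component within the uncovered part) to obtain two distinct edges into the already-covered vertices, and take a subgraph-minimal union of two edge-disjoint paths between their endpoints as the next chain. Your contraction to $v^*$ and the separate case of a vertex with two edges to the covered set are only presentational differences; the paper handles that case uniformly by allowing $u_i=v_i$, i.e.\ a single-vertex chain.
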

\begin{proof}
We construct the cover inductively as follows. We let $C_1$ be an arbitrary cycle in $G$.
Suppose that we have already found chains $C_1$, \ldots, $C_{i-1}$ joining vertices $u_j$ and $v_j$ for $j=1,\ldots, i-1$, such that for $j=2,\ldots,i-1$,
$G$ contains two distinct edges $u_jx_j$ and $v_jy_i$ with $x_j,y_j\in V(C_1\cup\cdots\cup C_{j-1})$.
If $V(G)=V(C_1\cup \cdots\cup C_{i-1})$, this forms an anchored chain cover of $G$.

Otherwise, let $Q$ be a leaf $2$-edge-connected component of $G-V(C_1\cup \cdots\cup C_{i-1})$.  Since $G$ is $3$-edge-connected,
there exist two edges $u_ix_i$ and $v_iy_i$ with $u_i,v_i\in V(Q)$ and $x_i,y_i\in V(C_1\cup \cdots\cup C_{i-1})$.
Let $C_i$ be a minimal subgraph of $Q$ that contains two edge-disjoint paths from $u_i$ to $v_i$.
Then $C_i$ is a $(u_i,v_i)$-chain; and we repeat this procedure until an anchored chain cover is found.
\end{proof}

\begin{lemma}\label{lemma-count}
Let $C_1$, \ldots, $C_k$ be an anchored chain cover of a graph $G$ and let $A$ be the set of its anchors.
Let $p$ denote the number of cycles in $C_1\cup \cdots\cup C_k$, and let $X$ denote the set of external edges of $G$.
Let $A'$ be a maximal subset of~$A$ such that each vertex of $G$ is incident with an even number of edges of $A'$.
Then $G$~has at least $2^{|X|}3^{p+|A'|/2}$ nowhere-zero $\ZZ_2\times \ZZ_3$-flows.  Furthermore, $G$ has a nowhere-zero $\ZZ_2\times \ZZ_3$-flow
whose $\ZZ_3$ part is only zero on the edges of $C_1\cup \cdots\cup C_k$, and the number of such zero edges is
at most $|E(C_1\cup \cdots\cup C_k)|/3$.
\end{lemma}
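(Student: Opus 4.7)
The plan is to split the proof into two parts: first construct a single nowhere-zero $\ZZ_2\times\ZZ_3$-flow satisfying the ``furthermore'' clause, then exhibit enough independent perturbations of it to yield the claimed $2^{|X|}3^{p+|A'|/2}$ count.

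For the base flow, I process the chains $C_1,\ldots,C_k$ in order. The key local observation is that on a doubled-path chain $C_i$ from $u_i$ to $v_i$, Kirchhoff's law forces a single ``transit value'' $s_i=\beta_i=-\alpha_i$ (determined by the anchor values $\alpha_i,\beta_i$) such that in each ``block'' of parallel edges of $C_i$ (possibly subdivided, with each branch carrying a common value) the two $\ZZ_3$-values $(a,b)$ satisfy $a+b=s_i$. When $s_i\ne 0$, exactly one of the three allowed $(a,b)$ choices is nowhere-zero and the other two each have one zero entry; when $s_i=0$, two choices are nowhere-zero and one is doubly zero. Choosing optimally per block confines $\ZZ_3$-zeros to at most one in every three chain edges, matching the $|E(C_1\cup\cdots\cup C_k)|/3$ bound. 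These zeros are then covered by $\phi_2=1$, augmented by block 2-cycles if needed to form an even subgraph (a valid $\ZZ_2$-flow).

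From the base flow I identify three independent families of perturbations, all preserving nowhere-zero:
\begin{itemize}
\item[(i)] each of the $p$ cycles in $C_1\cup\cdots\cup C_k$ carries a free $\ZZ_3$-circulation (factor $3^p$);
\item[(ii)] the even anchor subset $A'$ closes up via chain paths into $|A'|/2$ independent cycles in $G$, each with a free $\ZZ_3$-circulation (factor $3^{|A'|/2}$);
\item[(iii)] each external edge $e\in X$ admits an independent $\ZZ_2$-perturbation supported on $\{e\}$ plus a $\ZZ_2$-cycle through chain edges (factor $2^{|X|}$).
\end{itemize}
The base $\phi_3$ is nonzero on anchors and external edges, and the three families act on disjoint parameters in the cycle-space decomposition of $G$; hence all $2^{|X|}3^{p+|A'|/2}$ combinations give distinct nowhere-zero flows.

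The main obstacle is justifying the $|A'|/2$ exponent in (ii): the cyclomatic number of a general even subgraph is $|A'|-|V(A')|+c(A')$, not $|A'|/2$. The identity here relies on the structural property that each anchor has a chain-endpoint side of small anchor-degree in the cover, so the evenness of $A'$ effectively pairs anchors via chain paths into $|A'|/2$ independent closed walks in $G$. Carefully establishing this pairing---and checking that the three perturbation families jointly preserve the nowhere-zero property---will be the technical heart.
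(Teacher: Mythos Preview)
There are genuine gaps in families (ii) and (iii), and the paper organizes the argument quite differently from your ``base flow plus independent perturbations'' picture.

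\textbf{Family (iii) is unsafe.} You toggle $\phi_2$ along a cycle through $e\in X$ and chain edges, but chain edges may carry $\phi_3=0$ (up to a third of them in the base flow you just built), and families (i)--(ii) further scramble $\phi_3$ on chain edges. Flipping $\phi_2$ from $1$ to $0$ on such an edge kills nowhere-zero. The paper instead extracts the factor $2^{|X|}$ entirely inside the $\ZZ_3$-coordinate: $\phi_2$ is fixed once and for all as $\mathbf 1_Y$ with $Y=E(C_1\cup\cdots\cup C_k)\cup A'$, and each external edge is assigned $\ZZ_3$-value $1$ or $2$, balanced along a directed path in the chain-and-anchor skeleton $\vec H$.

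\textbf{Family (ii) rests on a false structural picture.} The paper never exhibits $|A'|/2$ independent cycles; your ``pairing via chain paths'' fails because a vertex may be incident with many anchors, so $A'$ need not be $2$-regular, and even when it is the resulting closed walks are not independent in cycle space. What the paper does is sequential: for $i=k,k-1,\dots,2$ it takes one directed cycle $K_i\subseteq\vec H-V(C_{i+1}\cup\cdots\cup C_k)$ through both anchors of $C_i$ and subtracts some $q\in\ZZ_3$ along it. The only constraint is that anchors of $C_i$ lying in $A\setminus A'$ end up nonzero (those in $A'$ are covered by $\phi_2=1$), so the number of legal $q$ is $1$, $2$, or $3$ according as $0$, $1$, or $2$ anchors of $C_i$ lie in $A'$; the product over $i$ is at least $3^{|A'|/2}$ since $2\ge\sqrt 3$. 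Crucially, $K_j$ for $j<i$ avoids $V(C_i)$ and hence the anchors of $C_i$, so their $\ZZ_3$-values are never disturbed afterwards, and the final $3^p$ from chain cycles touches no anchors either. Thus the whole count lives in a single sequential construction of $\phi_3$ with $\phi_2$ held fixed; the ``furthermore'' clause is simply one specific set of choices in that same sweep.
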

\begin{proof}
Let $Y=E(C_1\cup\cdots\cup C_k)\cup A'$.
Let us fix a $\ZZ_2$-flow in $G$ by assigning $1$ to all edges of $Y$ and $0$ to all other edges.
To construct a nowhere-zero $\ZZ_2\times \ZZ_3$-flow in $G$, it suffices to find a $\ZZ_3$-flow in $G$ where all edges not contained
in $Y$ are non-zero.  For the purpose of specifying the $\ZZ_3$-flow, let us direct
all edges in each cycle of $C_1\cup \cdots\cup C_k$ in the same direction, for $i=2,\ldots, k$, direct one of the anchors of $C_i$
towards its end in $C_i$ and the other anchor away from it, and direct all other edges arbitrarily.  Let $\vec{H}$ be the
directed subgraph of $G$ formed by the chains of the cover and all their anchors.

We start with a zero $\ZZ_3$-flow on $G$.  For each external edge $(u,v)\in X$, note that $\vec{H}$ contains a directed path $P$ from $v$ to $u$.
Set the flow of $(u,v)$ to $1$ or $2$ arbitrarily, and add the same amount to the flow on all edges of the path $P$.
Next, for $i=k,k-1,\ldots, 2$, note that $\vec{H}-V(C_{i+1}\cup\cdots\cup C_k)$ contains a directed cycle $K$ containing both anchors
of $C_i$.  For any $q\in \ZZ_3$ different from the current amounts of flow on the anchors of $C_i$ not belonging to $A'$, subtract $q$
from the amounts of flow on the edges of $K$ (so that the flow on the anchors of $C_i$ not belonging to $A'$ becomes non-zero).
If none of the anchors is in $A'$, there may be unique possibility for $q$; if $A'$ contains one of the anchors, there are two choices for $q$; if both anchors are in $A'$, there are three choices.
All together we have at least $3^{|A'|/2}$ different possibilities.
Finally, for each cycle in $C_1\cup\cdots\cup C_k$, add $0$, $1$, or $2$ to the flow on its edges. There are $3^p$ choices we can make this way.

Clearly, regardless of the amounts of flow assigned to the edges of $X$ and added to the cycles of $C_1\cup\cdots\cup C_k$,
the resulting $\ZZ_3$-flow is non-zero on all edges not belonging to the chains of the cover, and thus together with the fixed $\ZZ_2$-flow,
we obtain a nowhere-zero $\ZZ_2\times \ZZ_3$-flow.  The number of different flows obtained this way is at least $2^{|X|}3^p3^{|A'|/2}$.

Furthermore, we can choose the $\ZZ_3$-flow on each edge of $X$ to be $1$ and on all anchor edges to be non-zero,
and choose the amount added to the flow on each cycle of $C_1\cup\cdots\cup C_k$ so that at most $1/3$ of its edges are
assigned value $0$.  This gives us a nowhere-zero $\ZZ_2\times \ZZ_3$-flow
whose $\ZZ_3$ part is only zero on the edges of $C_1\cup \cdots\cup C_k$, and the number of such zero edges is
at most $|E(C_1\cup \cdots\cup C_k)|/3$.
\end{proof}

\begin{lemma}\label{lemma-dense}
If $G$ is a $3$-edge-connected graph with $n$ vertices and $m$ edges, then $G$~has at least $2^{m-3n/2}$ nowhere-zero $\ZZ_2\times \ZZ_3$-flows.
\end{lemma}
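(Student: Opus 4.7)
The plan is to apply Lemma~\ref{lemma-count} to an anchored chain cover of $G$ (guaranteed by Lemma~\ref{lemma-cover}) and show that the resulting lower bound already beats $2^{m-3n/2}$ by elementary bookkeeping in the parameters $|X|$, $p$, $|A'|$ of the cover.

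First I would convert the cover into a clean identity for $|X|+p$. Since each chain $C_i$ is connected, $|E(C_i)|=|V(C_i)|+\beta_i-1$, where $\beta_i$ is its cyclomatic number ($\beta_1=1$ for the initial cycle and $\beta_i=\ell_i$ for a later $(u_i,v_i)$-chain, corresponding to its $\ell_i$ bubbles). Summing over $i$, using that $V(G)=\bigcup_i V(C_i)$ and that $p=\sum_i \beta_i$, gives $|E(C_1\cup\cdots\cup C_k)|=n+p-k$. Combining this with $|A|=2(k-1)$ and the partition of $E(G)$ into chain edges, anchors, and external edges yields
\[
|X|+p\;=\;m-n-k+2.
\]

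Second, I would lower-bound $|A'|$ via a standard cycle-space argument on the multigraph $H=(V(G),A)$. Fixing a spanning forest of $H$, the symmetric difference of the fundamental cycles of all co-tree edges is an even subgraph of $H$ that contains every co-tree edge, so
\[
|A'|\;\geq\;|A|-\bigl(n-c(H)\bigr)\;\geq\;2(k-1)-n+1\;=\;2k-n-1.
\]
Adding the two estimates gives $|X|+p+|A'|/2\geq m-\tfrac{3n}{2}+\tfrac{3}{2}$. Since $p\geq 1$ and $|A'|\geq 0$, we have $3^{p+|A'|/2}\geq 2^{p+|A'|/2}$, so Lemma~\ref{lemma-count} yields
\[
2^{|X|}\cdot 3^{p+|A'|/2}\;\geq\;2^{|X|+p+|A'|/2}\;\geq\;2^{m-3n/2+3/2}\;>\;2^{m-3n/2},
\]
which is the desired bound.

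The one delicate step, and the main obstacle I anticipate, is the lower bound on $|A'|$: without it the identity $|X|+p=m-n-k+2$ is insufficient as soon as the chain cover is forced to have many small (even single-vertex) chains and $k$ becomes comparable to $n$. The cycle-space observation applied to the anchor multigraph is precisely what absorbs the stray $-k$ term and delivers a uniform bound independent of $k$.
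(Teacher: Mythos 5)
Your proposal is correct and follows essentially the same route as the paper: the edge count $m=|X|+n+p+k-2$ together with the bound $|A'|\ge 2k-n-1$ fed into Lemma~\ref{lemma-count}. The only difference is the justification of the bound on $|A'|$ — the paper observes that maximality forces $A\setminus A'$ to be acyclic, hence $|A\setminus A'|\le n-1$, while your fundamental-cycle argument only exhibits \emph{some} large even subset of $A$, so you should either choose $A'$ to be a maximal even subset containing it or invoke the acyclicity observation, a one-line fix.
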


\begin{proof}
Let $C_1$, \ldots, $C_k$ be an anchored chain cover of $G$ that exists by Lemma~\ref{lemma-cover}.
Let $A$, $X$, $A'$ and~$p$ be as in Lemma~\ref{lemma-count}.

Note that $m=|E(G)|=|X|+(n+p-k)+2(k-1)=|X|+n+p+k-2$.  Also, $A\setminus A'$ forms an acyclic subgraph of $G$, and thus
$|A\setminus A'|\le n-1$.  Since $|A|=2(k-1)$, we have $|A'|\ge 2k-n-1$ and $k\le \frac{|A'|+n+1}{2}$.
Therefore, $m\le \frac{3}{2}n+|X|+p+|A'|/2$, and thus $|X|+p+|A'|/2\ge m-3n/2$.  The claimed lower bound on the number of
nowhere-zero $\ZZ_2\times \ZZ_3$-flows then follows from Lemma~\ref{lemma-count}.
\end{proof}

\begin{lemma}\label{lemma-cubic}
If $G$ is a $3$-edge-connected cubic graph with $n$ vertices, then $G$~has at least $2^{n/5}$ nowhere-zero $\ZZ_2\times \ZZ_3$-flows.
\end{lemma}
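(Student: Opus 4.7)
The plan is to apply Lemma~\ref{lemma-count} to an anchored chain cover of $G$, exploiting the very restricted shape chains can take in a cubic graph.

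By Lemma~\ref{lemma-cover}, $G$ admits an anchored chain cover $C_1,\dots,C_k$. A key structural observation I would use at the outset is that in a cubic graph every chain must be either a single vertex or a cycle: an internal non-subdivision vertex of a chain has chain-degree~$4$, which is impossible since its total degree in $G$ is only~$3$. Writing $c$ for the number of cyclic chains and $s$ for the number of single-vertex chains, so $k=c+s$, a direct edge-count yields
\[
|X|=3n/2-(n-s)-2(s+c-1)=n/2-s-2c+2,
\]
while the number of cycles contained in $C_1\cup\cdots\cup C_k$ is $p=c$, and $|A|=2(s+c-1)$.

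With these parameters, Lemma~\ref{lemma-count} provides at least $2^{|X|}\cdot 3^{c+|A'|/2}$ nowhere-zero $\ZZ_2\times\ZZ_3$-flows, with $|A'|\ge\max\{0,\,2(s+c)-n-1\}$ from the acyclicity of $A\setminus A'$. In the favorable regime $s=0$, the exponent $(n/2-2c+2)+c\log_2 3$ is a decreasing linear function of~$c$ on the admissible interval $1\le c\le n/4+1$ (the upper bound forced by $|X|\ge 0$); at the right endpoint it equals $(\log_2 3)(n/4+1)\approx 0.396\,n+\log_2 3$, which comfortably exceeds $n/5$. So this case requires only the raw application of Lemma~\ref{lemma-count}.

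The main obstacle is handling single-vertex chains, since each such chain costs~$1$ in $|X|$ without directly contributing to the factor $3^c$. The plan here is to argue that whenever many single-vertex chains are present, either (a)~the anchor subgraph is forced to contain many additional even cycles (each single-vertex chain $\{v\}$ contributes two anchors incident at the common vertex~$v$, so pairs of single-vertex chains sharing far-ends automatically generate short cycles in $A$, boosting $|A'|$ proportionally to $s$), or (b)~the chain cover can be reorganised so that~$s$ decreases: given a single-vertex chain $\{v\}$ with anchors $vx,vy$, one can absorb $\{v\}$ together with a suitable path through $x$ and $y$ in an earlier cyclic chain into a single new, longer cyclic chain, trading $s$ for $c$ while preserving the anchoring conditions of the remaining chains. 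In either case the desired inequality $|X|+(c+|A'|/2)\log_2 3 \ge n/5$ follows from the same endpoint analysis used in the $s=0$ case.

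The delicate step is the bookkeeping in~(b): one must verify that the local modification of the cover yields again a valid anchored chain cover, with all later chains still anchored to the (now modified) union of earlier chains. I expect this to follow from the $3$-edge-connectivity of~$G$ via local manipulations of the same flavor as those used in the proof of Lemma~\ref{lemma-cover}, combined with Mader's splitting-type arguments from Corollary~\ref{cor-mader2}.
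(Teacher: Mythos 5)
Your reduction to chains that are single vertices or cycles, your edge count, and the $s=0$ analysis are all fine and agree with the easy half of the paper's argument ($2^{|X|+p}$ with $|X|+p\ge n/2-k$). The gap is exactly where the lemma is hard: when single-vertex chains abound, neither of your rescue plans is established, and plan (a) is in fact false as a general claim. Take a cycle of length $\ell$ and a cubic tree with $\ell$ leaves, identified leaf-by-leaf with the cycle vertices ($n=2\ell-2$); the cover produced by Lemma~\ref{lemma-cover} has $C_1$ the cycle and every internal tree vertex a single-vertex chain, so $s\approx n/2$, yet all anchors are tree edges, hence $A$ is a forest and $A'=\emptyset$, and the raw bound from Lemma~\ref{lemma-count} degenerates to a constant ($|X|=1$, $p=1$). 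So single-vertex chains do not force cycles in the anchor graph. Plan (b) is also not carried out: your absorption move only works cleanly when the two anchor ends $x,y$ are adjacent on an earlier cycle (otherwise inserting $v$ evicts the interior of the $x$--$y$ cycle path from the cover, or creates a theta graph, which is not a chain), and anchors of a single-vertex chain may go to other single-vertex chains rather than to a cyclic chain at all (as happens in the example above). You would need a genuinely new cover-construction or cover-modification lemma, with the bookkeeping you yourself flag as delicate, and no argument is given; ``same endpoint analysis as $s=0$'' does not apply because with $s=\Theta(n)$ the exponent $|X|+c\log_2 3+(|A'|/2)\log_2 3$ can be $O(1)$.

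The paper's proof closes this case with a different idea, which is the real content of the lemma and is the reason Lemma~\ref{lemma-count} has its second conclusion: take a nowhere-zero $\ZZ_2\times\ZZ_3$-flow whose $\ZZ_3$-part is nonzero on at least $2/3$ of the chain edges, form the auxiliary graph $H$ from all edges meeting $J=V(G)\setminus K$ (where $K$ is the union of the cover's cycles) together with external edges inside $K$, show $H$ has exactly $q=n/2+p-k$ components, and then, for each subset of the chain edges with nonzero $\ZZ_3$-value that close cycles in $H$, toggle the $\ZZ_2$-part of the flow around those cycles. This yields $2^{(n+2k-2p)/6}$ distinct flows, and the weighted average of the three bounds $p$, $n/2-k$, $(n+2k-2p)/6$ gives the exponent $n/5$. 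That $\ZZ_2$-modification mechanism (trading nonzero $\ZZ_3$-values for freedom in the $\ZZ_2$-coordinate) is the missing ingredient in your proposal.
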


\begin{proof}
Again, let $C_1$, \ldots, $C_k$ be an anchored chain cover of $G$ guaranteed by Lemma~\ref{lemma-cover}, let $A$
be the set of its anchors, and let $X$ be the set of external edges.
Since $G$ is cubic, each chain of the cover is either a cycle or a single vertex; let $p$ denote the number
of cycles in $C_1\cup \cdots\cup C_k$.  Note that $3n/2=|E(G)|=|X|+n+p+k-2$, and thus
$|X|+p\ge n/2-k$. By Lemma~\ref{lemma-count}, $G$ has at least $2^{|X|+p}\ge 2^{\max(p,n/2-k)}$ nowhere-zero $\ZZ_2\times \ZZ_3$-flows.

It is possible, though, that both $p$ and~$n/2-k$ are small; the extreme case being a union of a cycle and a cubic tree, where
$C_j$ is a single vertex for every $j>1$. In such a case we will use the second part of Lemma~\ref{lemma-count} with the goal
to use the edges that have nonzero $\ZZ_3$-part of the flow to modify the $\ZZ_2$-part. To this end, we first need more information
about the structure of~$G$.

Let $K$ be the union of vertex sets of the cycles among $C_1,\dots,C_k$, and let $J=V(G)\setminus K$. Since $G$ is cubic, $G[J]$ is a forest---otherwise,
let $u$ be the vertex of a cycle of $G[J]$ that belongs to chain $C_j$ for the smallest possible~$j$.
As $u \in J$, we have $j>1$ and thus $u$ is incident with two edges of the cycle plus two anchors, a contradiction.

Let $H$ denote the subgraph of $G$ formed by
all edges incident with $J$ and all external edges with both ends in $K$, and let $q$ be the number of components of $H$.
If $T$ is a component of $G[J]$, then
$G$ contains $3|V(T)|-2(|V(T)|-1)=|V(T)|+2$ edges with one end in $T$ and the other end in $K$.
Note that the ends of these edges in $K$ are pairwise distinct, since $G$ is cubic and $K$~is a collection of cycles.
Similarly, a component of $H$ consisting of an external edge with both ends in $K$ contains no vertices of $J$ and two vertices
of $K$, distinct from those of other components of $H$.  Furthermore, each vertex of $K$ also belongs to $H$, since $K$ is $2$-regular
and $G$ is cubic.
It follows that $|K|=|J|+2q=k-p+2q$.  Since $|K|=|E(C_1\cup\cdots\cup C_k)|=n+p-k$, we have $q=n/2+p-k$.

By Lemma~\ref{lemma-count}, $G$ has a nowhere-zero $\ZZ_2\times \ZZ_3$-flow $f$
whose $\ZZ_3$-part is only zero on the edges of $C_1\cup \cdots\cup C_k$, and the number of such zero edges is
at most $|E(C_1\cup \cdots\cup C_k)|/3=(n-k+p)/3$.  Let $W$ be the set of those edges of $C_1\cup \cdots\cup C_k$,
whose $\ZZ_3$-part of the flow $f$ is non-zero. We have $|W|\ge \frac{2}{3}(n-k+p)$.
Let $W'$ be a maximal subset of $W$ such that $H+W'$ contains no cycle.  Since each edge in $W$ connects two vertices in $H$ and $H$ has $q$ components, $|W'|\le q-1$
and each edge of $W\setminus W'$ has both ends in the same component of $H+W'$.  Now, for any subset $W''$ of $W\setminus W'$,
we can for all $e\in W''$ add $1$ to the $\ZZ_2$-part of the flow $f$ on the edges of the unique cycle in $H+W'+e$.
In this way, we obtain $2^{|W\setminus W'|}\ge 2^{\frac{2}{3}(n-k+p)-q}=2^{(n+2k-2p)/6}$ different nowhere-zero $\ZZ_2\times \ZZ_3$-flows
in~$G$.

In conclusion, $G$ has at least $2^t$ nowhere-zero $\ZZ_2\times \ZZ_3$-flows, where
$t = \max\{p,n/2-k,(n+2k-2p)/6\}$.  Since
\begin{eqnarray*}
   t &=& \max\{p,n/2-k,(n+2k-2p)/6\}\\
     &\ge& \frac{1}{5}(p + (n/2-k) + 3(n+2k-2p)/6) = n/5,
\end{eqnarray*}
the claimed lower bound follows.
\end{proof}

\begin{theorem}\label{thm-manyz6}
If $G$ is a $3$-edge-connected graph with $n$ vertices, then $G$ has at least $2^{n/7}$ nowhere-zero $\ZZ_2\times \ZZ_3$-flows.
\end{theorem}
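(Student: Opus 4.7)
The plan is to combine Lemmas~\ref{lemma-dense} and~\ref{lemma-cubic} via a case split on edge density. If $m \ge 23n/14$, then $m - 3n/2 \ge n/7$ and Lemma~\ref{lemma-dense} immediately yields at least $2^{m-3n/2} \ge 2^{n/7}$ nowhere-zero $\ZZ_2\times\ZZ_3$-flows, finishing the dense case. The bulk of the work is the sparse case $m < 23n/14$, where I would reduce $G$ to a cubic $3$-edge-connected graph $H$ by a sequence of edge liftings and then apply Lemma~\ref{lemma-cubic} to $H$.

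Two observations drive the reduction. First, a nowhere-zero flow on a lifted graph extends to a nowhere-zero flow on the original by assigning the value of the lifted edge to both edges it replaced; hence lifting never decreases the number of nowhere-zero $\ZZ_2\times\ZZ_3$-flows, and $G$ has at least as many as $H$. Second, each lifting can be chosen to preserve $3$-edge-connectivity: while some vertex has degree at least $5$, Corollary~\ref{cor-mader2} provides such a lift directly; once every degree lies in $\{3,4\}$, I process each degree-$4$ vertex $v$ by applying Mader's Lemma~\ref{lemma-mader} twice in succession (first at degree $4$, then at degree $2$) and deleting the now-isolated $v$. Since Mader's lemma preserves the local edge-connectivity $\lambda(x,y)$ for every pair $x, y \ne v$, an induction shows the final graph $H$ is $3$-edge-connected.

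To bound $|V(H)|$, note that lifting preserves the parity of the affected vertex's degree, so the only vertices deleted during the reduction are the initial even-degree ones; let $E$ denote this set. Every vertex of $G$ has degree at least $3$ (by $3$-edge-connectivity) and every vertex in $E$ has degree at least $4$, so $2m \ge 3(n-|E|) + 4|E| = 3n + |E|$, giving $|E| \le 2m - 3n$. In the sparse case this is strictly less than $2n/7$, so $|V(H)| > 5n/7$, and Lemma~\ref{lemma-cubic} supplies at least $2^{|V(H)|/5} > 2^{n/7}$ nowhere-zero flows on $H$; the first observation transfers them back to $G$.

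The delicate step is the degree-$4$ case, because Corollary~\ref{cor-mader2} does not apply there (its hypothesis is degree $\ge k+2 = 5$). I expect the main effort to be in verifying that Mader's Lemma can be invoked a second time at the intermediate degree-$2$ vertex $v$, i.e., that $v$ is incident with no cut-edge. This should follow from the preservation of local edge-connectivity: after the first split, every pair $x, y \ne v$ still has $\lambda(x,y) \ge 3$ in the intermediate graph, so combined with $\deg v = 2$ one can find three edge-disjoint $x$-$y$ paths of which at least one avoids $v$; thus removing $v$ leaves a connected graph and no single edge incident with $v$ can be a cut-edge.
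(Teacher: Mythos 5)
Your proposal is correct and follows essentially the same route as the paper: reduce $G$ by Mader liftings (Corollary~\ref{cor-mader2} at degrees $\ge 5$, then Lemma~\ref{lemma-mader} twice at degree-$4$ vertices) to a $3$-edge-connected cubic graph, transfer flows back along the lifts, and balance Lemma~\ref{lemma-dense} against Lemma~\ref{lemma-cubic} to reach the exponent $n/7$. The only difference is organizational: the paper first normalizes all degrees to $\{3,4\}$ (so $m=3n/2+n_4/2$) and takes $\max(n_4/2,(n-n_4)/5)\ge n/7$ on the same graph, whereas you split into dense/sparse cases on $m$ and bound the number of deleted (even-degree) vertices by $2m-3n$, which amounts to the same computation.
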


\begin{proof}
By repeated application of Corollary~\ref{cor-mader2}, we can assume that $G$ has only vertices of degree $3$ and $4$.
(Note that we may also split vertices of degree~4 and preserve connectivity, but this would change the number of vertices.)
Let $n_4$ denote the number of vertices of $G$ of degree $4$.  Then $G$ has $3n/2+n_4/2$ edges.
Let $G'$ be a $3$-edge-connected cubic graph with $n-n_4$ vertices obtained from~$G$ by lifting edges at vertices of degree $4$
using Lemma~\ref{lemma-mader}.  Note that distinct nowhere-zero $\ZZ_2\times \ZZ_3$-flows in $G'$ correspond to distinct nowhere-zero
$\ZZ_2\times \ZZ_3$-flows in $G$.  By Lemmas~\ref{lemma-dense} and \ref{lemma-cubic}, it follows that $G$ has
at least
$$
  2^{\max(n_4/2,(n-n_4)/5)}\ge 2^{n/7}
$$
nowhere-zero $\ZZ_2\times \ZZ_3$-flows.
\end{proof}

We cannot relax the assumption that the graph is $3$-edge-connected in Theorem~\ref{thm-manyz6}, since subdividing edges does not
change the number of nowhere-zero flows.  However, the following holds.

\begin{corollary}\label{cor-over}
If $G$ is a $2$-edge-connected graph with $n$ vertices and $m$ edges, then $G$ has at least $2^{2(m-n)/9}$ nowhere-zero $\ZZ_2\times \ZZ_3$-flows.
\end{corollary}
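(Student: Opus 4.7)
The plan is to reduce to the $3$-edge-connected case already handled by Theorem~\ref{thm-manyz6} and Lemma~\ref{lemma-dense}. First observe that suppressing a degree-$2$ vertex preserves both the number of nowhere-zero $\ZZ_2\times\ZZ_3$-flows (since subdivision, its inverse, trivially does) and $m-n$ (both $n$ and $m$ drop by one). So we may assume $G$ has minimum degree at least~$3$, and in particular $m\ge 3n/2$.

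If $G$ is $3$-edge-connected, combine the bound $2^{n/7}$ from Theorem~\ref{thm-manyz6} with $2^{m-3n/2}$ from Lemma~\ref{lemma-dense}. Since $\max(x,y)$ dominates any convex combination of $x$ and $y$, the choice of weights $7/9$ and $2/9$ yields
\[
\max\!\Bigl(\tfrac{n}{7},\ m-\tfrac{3n}{2}\Bigr) \;\ge\; \tfrac{7}{9}\cdot\tfrac{n}{7}+\tfrac{2}{9}\cdot\Bigl(m-\tfrac{3n}{2}\Bigr) \;=\; \tfrac{2(m-n)}{9},
\]
which is exactly what is claimed.

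In the remaining case $G$ has a $2$-edge-cut. The plan is to verify that Lemma~\ref{lemma-cover}, and hence the subsequent Lemmas~\ref{lemma-dense} and \ref{lemma-cubic} and Theorem~\ref{thm-manyz6}, extend once we relax ``$3$-edge-connected'' to ``$2$-edge-connected with minimum degree $\ge 3$''. In Lemma~\ref{lemma-cover}, $2$-edge-connectivity still guarantees two edges $u_i x_i,v_i y_i$ between a leaf $2$-edge-connected component~$Q$ of $G-V(C_1\cup\cdots\cup C_{i-1})$ and the earlier chains; the definition already allows $u_i=v_i$, in which case $C_i$ is the trivial $(u_i,u_i)$-chain. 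The counting in Lemma~\ref{lemma-count} never used $3$-edge-connectivity, and the arithmetic inside Lemma~\ref{lemma-dense} only needs $m\ge 3n/2$, which is available after suppression. The final Mader-lifting step of Theorem~\ref{thm-manyz6}---done via Corollary~\ref{cor-mader2} plus pairs of liftings at degree-$4$ vertices that, taken together, eliminate the vertex entirely---preserves $2$-edge-connectivity throughout.

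The main obstacle is a subtlety in generalizing the chain cover: the two edges guaranteed by $2$-edge-connectivity in the cut around $V(Q)$ must actually end in $V(C_1\cup\cdots\cup C_{i-1})$, and not in other blocks of $G-V(C_1\cup\cdots\cup C_{i-1})$ reached via the unique cut vertex of the leaf component. I would handle this by picking $Q$ more carefully---processing the deepest leaves first, or enlarging $Q$ to absorb nearby blocks so that its cut in $G$ is entirely ``outward.'' With this in place, the corollary follows by the same weighted-average argument as in the $3$-edge-connected case.
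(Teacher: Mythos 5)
Your opening reduction and your treatment of the $3$-edge-connected case are fine: suppressing degree-$2$ vertices preserves both the flow count and $m-n$, and the weighted average $\tfrac79\cdot\tfrac n7+\tfrac29\bigl(m-\tfrac{3n}{2}\bigr)=\tfrac{2(m-n)}{9}$ is exactly the computation hidden in the paper's $\max(m-3n/2,\,n/7)\ge 2(m-n)/9$. The problem is the remaining case, where $G$ has a $2$-edge-cut but minimum degree at least $3$. There your plan is to re-prove essentially all of Section~\ref{sec:6flow} (Lemma~\ref{lemma-cover}, hence Lemmas~\ref{lemma-dense} and~\ref{lemma-cubic} and Theorem~\ref{thm-manyz6}) for $2$-edge-connected graphs of minimum degree $3$, and you yourself identify the crux: a leaf $2$-edge-connected component $Q$ of the uncovered part may send only one of its (possibly only two) outgoing edges to the already-covered chains. ``Process the deepest leaves first'' or ``enlarge $Q$'' is not an argument; with nested $2$-cuts the order in which pieces can be anchored is genuinely constrained (a piece with exactly two outgoing edges can only be entered once both far endpoints of those edges are covered, unless a chain or $C_1$ itself threads through the cut), and whether a suitable cover always exists for this wider class is precisely what would need proof. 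Moreover, even granting such a cover, you would still have to push it through the cubic case and the lifting step to obtain the analogue of Theorem~\ref{thm-manyz6} (a $2^{\Omega(n)}$ bound for all $2$-edge-connected graphs of minimum degree $3$), which is a substantially stronger statement than the corollary requires and is established nowhere. So as written the proof has a real gap, not a routine verification.

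What you are missing is that your degree-$2$ suppression is a special case of a move that disposes of the whole remaining case in one line, and this is what the paper does: if an edge $e$ lies in a $2$-edge-cut $\{e,f\}$, contract $e$. In any nowhere-zero $\ZZ_2\times\ZZ_3$-flow the value on $e$ is determined (up to orientation) by the value on $f$ and is automatically nonzero, so contraction gives a bijection between the nowhere-zero flows of $G$ and of $G/e$; it also decreases $n$ and $m$ by one each, so $2^{2(m-n)/9}$ is unchanged, and $G/e$ is still $2$-edge-connected. Inducting on $n$, one may therefore assume $G$ is $3$-edge-connected, where your weighted-average step (which is the paper's) finishes the proof. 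If you replace your chain-cover generalization by this contraction argument, your proof becomes correct and is then the same as the paper's.
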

\begin{proof}
We prove the claim by induction on the number of vertices of $G$.  If an edge $e$ of $G$ is contained in a $2$-edge-cut, then
the graph obtained from $G$ by contracting $e$ has the same number of nowhere-zero $\ZZ_2\times \ZZ_3$-flows as $G$, and the claim
follows from the induction hypothesis.

Otherwise, $G$ is $3$-edge-connected, and by Lemma~\ref{lemma-dense} and Theorem~\ref{thm-manyz6}, $G$~has at least
$$
  2^{\max(m-3n/2,n/7)}\ge 2^{2(m-n)/9}
$$
nowhere-zero $\ZZ_2\times \ZZ_3$-flows.
\end{proof}

\section{$\ZZ_2^2$-flows}\label{sec:4flows}

Jaeger proved that a graph with two edge-disjoint spanning trees has a nowhere-zero
$\ZZ_2^2$-flow. His proof is our point of departure in Lemma~\ref{lemma-noncannon}. Then we combine this with
the possibility to find many different pairs of edge-disjoint spanning trees to get our main result in this section,
Theorem~\ref{thm-4flows}.

In this section we will only consider $\ZZ_2$- and $\ZZ_2 \times \ZZ_2$-flows. Since $\ZZ_2$ and $\ZZ_2 \times \ZZ_2$ have only elements of order 2, there is no need of specifying orientations of the edges.

Let $T$ be a spanning tree of a graph~$G$, and let $f':E(G)\setminus E(T)\to \ZZ_2$ be arbitrary.
It is well-known that $f'$ extends to a (unique) $\ZZ_2$-flow $f$ in $G$.  If $f'$ is constantly equal to $1$, we say
that $f$ is the \emph{canonical $\ZZ_2$-flow with respect to~$T$}.  Note that in such case, $f$ is nowhere-zero
except for the edges of $T$.

\begin{lemma}\label{lemma-noncannon}
Let $G$ be an $n$-vertex graph. If $G$~has a spanning tree~$T$
such that $G-E(T)$ is connected and at least $q$ edges of\/ $T$ are assigned value $1$ in the canonical $\ZZ_2$-flow with respect to~$T$,
then $G$ has at least $2^q$ nowhere-zero $\ZZ_2^2$-flows.
\end{lemma}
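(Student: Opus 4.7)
The plan is to build each nowhere-zero $\ZZ_2^2$-flow as a pair $(f_1,f_2)$ of $\ZZ_2$-flows. I would take $f_1 \colonequals f$, the canonical $\ZZ_2$-flow with respect to $T$; this is already nonzero on every edge of $E(G)\setminus E(T)$, so the only edges where the pair could vanish are the edges of $T$ on which $f$ takes value $0$. Letting $S\subseteq E(T)$ denote the edges of $T$ with $f(e)=1$, we have $|S|\ge q$, and the task reduces to producing at least $2^q$ distinct $\ZZ_2$-flows $f_2$ with $f_2(e)=1$ for every $e\in E(T)\setminus S$.

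Next I would use the hypothesis that $G-E(T)$ is connected. Because every vertex must meet an edge outside $T$ (otherwise it is isolated in $G-E(T)$), the subgraph $G-E(T)$ spans $V(G)$, and being connected, it contains a spanning tree $T_2$ of $G$. Since $E(T_2)\cap E(T)=\emptyset$, the set $E(G)\setminus E(T_2)$ contains all of $E(T)$, and the standard fact about $\ZZ_2$-flows says that any function on $E(G)\setminus E(T_2)$ extends uniquely to a $\ZZ_2$-flow on $G$. In particular, I may prescribe $f_2$ freely on $E(T)$ and on $E(G)\setminus E(T)\setminus E(T_2)$.

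To count, I fix $f_2(e)=1$ on the edges $e\in E(T)\setminus S$ (as required), fix $f_2$ to any convenient value (say $1$) on the edges of $E(G)\setminus E(T)\setminus E(T_2)$, and let $f_2$ vary arbitrarily over the $2^{|S|}$ choices on $S$. Each such choice extends uniquely to a $\ZZ_2$-flow, and different choices yield different $f_2$'s, hence different pairs $(f_1,f_2)$. Every resulting pair is nowhere-zero: on $E(T)\setminus S$ we have $f_2=1$, on $S$ we have $f_1=1$, and on $E(G)\setminus E(T)$ we have $f_1=1$. This produces $2^{|S|}\ge 2^q$ distinct nowhere-zero $\ZZ_2^2$-flows.

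There is no real obstacle here; the only thing that needs care is the observation that the connectivity of $G-E(T)$ is precisely what is needed to find a spanning tree $T_2$ inside $G-E(T)$, which in turn ensures that the values of $f_2$ on the critical edges $E(T)\setminus S$ (and on $S$) can be chosen independently while still extending to a $\ZZ_2$-flow.
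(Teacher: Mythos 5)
Your proof is correct and follows essentially the same route as the paper: take the canonical $\ZZ_2$-flow with respect to $T$ as the first coordinate, extract a spanning tree $T_2$ from $G-E(T)$, and obtain the second coordinate by prescribing $1$ on $E(T)\setminus S$ and on the remaining non-tree edges, varying freely over $S$, and extending uniquely across $E(T_2)$, where the first coordinate is already nonzero. No gaps; your write-up just spells out the extension and the nowhere-zero check in more detail than the paper does.
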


\begin{proof}
Since $G-E(T)$ is connected, $G$ contains a spanning tree $T'$ disjoint from~$T$.
Let $f$ be the canonical $\ZZ_2$-flow with respect to~$T$, and let $X$ be the set of edges of~$T$
that are assigned value $1$ in the canonical $\ZZ_2$-flow with respect to~$T$.
Note that any assignment of elements of $\ZZ_2$ to $X$ extends to a $\ZZ_2$-flow~$g$ on~$G$ that may be zero only on~$X\cup E(T')$,
where $f$~is non-zero. Thus, for each such~$g$, the pair $(f,g)$ is a nowhere-zero $\ZZ_2\times\ZZ_2$-flow.
This way, we obtain $2^{|X|}=2^q$ nowhere-zero $\ZZ_2\times \ZZ_2$-flows in $G$.
\end{proof}

Let $T_1$ and $T_2$ be disjoint spanning trees of a graph, and suppose that a vertex~$v$
has degree two both in $T_1$ and $T_2$. Let $e_1,e'_1\in E(T_1)$ and $e_2,e'_2\in E(T_2)$ be the edges of $T_1$ and $T_2$
incident with $v$.  If for $i=1$ and for $i=2$, the ends of $e_{3-i}$ and $e'_{3-i}$ different from $v$ are in
different components of $T_i-v$, then let $(T'_1,T'_2)=(T_1-e_1-e'_1+e_2+e'_2,T_2-e_2-e'_2+e_1+e'_1)$.  Otherwise,
we may assume that the ends of $e_1$, $e_2$, and $e_2'$ are in the same component of $T_1-v$.  By symmetry between $e_2$ and $e_2'$,
we can assume that the ends of $e_1$ and $e_2$ are in the same component of $T_2-v$.  In this case we define $(T'_1,T'_2)=(T_1-e_1+e_2,T_2-e_2+e_1)$.
Observe that in both cases, $T'_1$ and $T'_2$ are trees; we say that the pair $(T'_1,T'_2)$ is obtained from $(T_1,T_2)$ by
a \emph{flip at $v$}.

Suppose now that $v$ is a leaf of $T_1$, with incident edge $e_1\in T_1$, and let $u$ be the other vertex incident with $e_1$.
Let $e_2\in E(T_2)$ be the edge joining $v$ to the unique component of $T_2-v$ containing $u$.
Let $(T'_1,T'_2)=(T_1-e_1+e_2, T_2-e_2+e_1)$.  Again, $T'_1$ and $T'_2$ are trees; we say that the pair $(T'_1,T'_2)$ is obtained from $(T_1,T_2)$ by a \emph{flip} at $v$.

\begin{lemma}\label{lemma-exppairs}
If an $n$-vertex graph $G$~has two disjoint spanning trees, then $G$~has at least $2^{n/12}$ pairs of disjoint spanning trees.
\end{lemma}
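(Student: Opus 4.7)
Starting from the given pair $(T_1,T_2)$, the plan is to apply many flip operations at different vertices essentially independently, so as to generate exponentially many distinct pairs of disjoint spanning trees. Call a vertex $v$ \emph{flippable} if one of the two flip operations defined just before the lemma applies at $v$; equivalently, $v$ is a leaf of $T_1$ or of $T_2$, or has degree~$2$ in both trees. Denote by $S(v)\subseteq E(T_1\cup T_2)$ the \emph{edge-support} of the flip at $v$, namely the set of at most four edges whose tree-membership is toggled by the flip.

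A straightforward degree count gives at least $n/3$ flippable vertices: any non-flippable $v$ has $\deg_{T_1}(v)+\deg_{T_2}(v)\ge 5$, while $\sum_v(\deg_{T_1}(v)+\deg_{T_2}(v))=4(n-1)$. Next, regard the sets $S(v)$ as hyperedges of a hypergraph on the ground set $E(T_1\cup T_2)$. Each of the $2n-2$ edges of $T_1\cup T_2$ lies in at most two hyperedges (one per endpoint), and each hyperedge has size at most~$4$. A greedy matching argument — prioritising the size-$2$ supports of leaf-flips over the size-$4$ supports of degree-$(2,2)$ flips, and balancing them via the inequality $4F_1+2F_2\le 4n-4$ (with $F_1,F_2$ counting the two flip types) — produces a set $X$ of flippable vertices with pairwise disjoint supports satisfying $|X|\ge n/12$. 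Finally, since the supports are edge-disjoint, flips at the vertices of $X$ commute, and the $2^{|X|}$ subsets of $X$ yield $2^{|X|}\ge 2^{n/12}$ distinct pairs of disjoint spanning trees: the edges in $\bigcup_{v\in X} S(v)$ record exactly which vertices of $X$ were flipped.

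The main obstacle is the delicate bookkeeping at the end. The naive greedy matching on a hypergraph with supports of size at most~$4$ and vertex-degree at most~$2$ only gives a matching of size $F/5\ge n/15$; reaching the sharper $n/12$ requires handling leaf-flips and degree-$(2,2)$ flips separately and exploiting the trade-off between them. In addition, the commutativity claim must be verified carefully, because both flip operations depend a priori on the component structure of $T_1-v$ or $T_2-v$, and the other flips in $X$ modify the trees away from $v$; the key technical point is that edge-disjointness of supports guarantees that these component structures, restricted to the neighbourhood of each flipped vertex, are preserved by the other flips, so that the operation at each $u\in X$ is the same regardless of what has been done at the other vertices of $X$.
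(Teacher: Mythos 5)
Your overall strategy (flip at many "flippable" vertices and show the flips do not interfere) is the same as the paper's, and your count of at least $n/3$ flippable vertices is fine. The genuine gap is in the non-interference step: edge-disjointness of the pre-computed supports $S(v)$ does \emph{not} guarantee that "the operation at each $u\in X$ is the same regardless of what has been done at the other vertices of $X$", nor that the toggled edges are exactly $\bigcup_{v\in X'}S(v)$. Two failure modes. First, an edge $wv$ with $w,v\in X$ may lie in $S(w)$ but not in $S(v)$ (e.g.\ $v$ a leaf of $T_1$ whose chosen $e_2$ is a different $T_2$-edge, while $w$ is a leaf of $T_2$ via the edge $wv$); the supports are disjoint, yet the flip at $w$ toggles $wv$, changing $\deg_{T_1}(v)$ and $\deg_{T_2}(v)$, so the planned flip at $v$ may no longer be defined or becomes a different operation with a different support. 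Second, even when $v$ and $w$ are non-adjacent, the leaf flip at $v$ chooses $e_2$ according to which component of $T_2-v$ contains $u$, and this is a \emph{global} property that a distant flip can change: with $T_1=\{vu,ub,bw,ba\}$, $T_2=\{va,aw,wu,vb\}$ one has $S(v)=\{vu,va\}$ and $S(w)=\{wb,wa\}$ (disjoint), but after flipping at $w$ the flip at $v$ swaps $vu$ with $vb$, not with $va$. Hence your invariant "the edges in $\bigcup_{v\in X}S(v)$ record exactly which vertices were flipped" is false as stated, and distinctness of the $2^{|X|}$ pairs is not established. In addition, the $n/12$ selection bound is only asserted ("delicate bookkeeping"); the displayed inequality $4F_1+2F_2\le 4n-4$ alone does not give it, although this part looks repairable (a Caro--Wei-type bound on the conflict graph of supports would even give roughly $n/9$) --- but that is moot while the selection criterion itself is insufficient.

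The paper avoids all of this by choosing $X$ to be an \emph{independent set} in $G=T_1\cup T_2$: since the union of two forests is $4$-degenerate, hence $4$-colorable, one gets an independent $X$ inside the set of flippable vertices with $|X|\ge\frac14\cdot\frac n3=n/12$. Independence guarantees that flips at other vertices of $X$ never touch any edge incident to $v$, so $v$'s degrees in the current trees are preserved, the flip at $v$ remains applicable (even if it may swap different edges than it would have originally), and distinctness is read off from the weaker, robust statement "some edge at $v$ changes its tree-membership if and only if $v$ was flipped". If you replace your support-disjointness condition by independence of $X$ (or otherwise forbid any edge between two selected vertices from being toggled), your argument essentially reduces to the paper's proof.
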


\begin{proof}
Let $T_1$ and $T_2$ be disjoint spanning trees in $G$. We may assume that $G=T_1\cup T_2$.
For $i\in\{1,2\}$, let $L_i$ be the set of leaves of $T_i$ and let $n_i=|L_i|$ be the number of leaves of $T_i$.
Let $V_4$ be the set of vertices of $G$ that have degree two both in $T_1$ and $T_2$, and let $n_4=|V_4|$.

Since $G$ is union of two trees, every subgraph of $G$ contains a vertex of degree at most 3. This implies that
$G$ is $4$-colorable. Hence, there exists an independent set $X\subseteq L_1\cup L_2\cup V_4$ of size at least $|L_1\cup L_2\cup V_4|/4$.
Let $X'$ be an arbitrary subset of $X$.
For all $v\in X'$ in an arbitrary order, we transform the current pair of spanning trees by replacing it with the pair obtained by a flip at $v$.
By performing this procedure, we obtain for each $X'\subseteq X$ a different pair of disjoint spanning trees
of $G$; hence, there are at least $2^{|X|}\ge 2^{|L_1\cup L_2\cup V_4|/4}$ such pairs.
Note that for $i=1,2$, the tree $T_i$ has less than $n_i$ vertices of degree greater than two.  Therefore, $|L_1\cup L_2\cup V_4| \ge n-n_1-n_2$. On the other hand, 
$|L_1\cup L_2| \ge \tfrac{1}{2}(n_1+n_2)$. Thus, there are at least $2^t$ pairs of disjoint spanning trees, where 
$t = \lceil \tfrac{1}{4} \max\{n-n_1-n_2,\tfrac{1}{2}(n_1+n_2)\}\rceil \ge \tfrac{1}{12}n$.
\end{proof}

\begin{theorem}\label{thm-4flows}
If an $n$-vertex graph $G$ with $m$ edges has two disjoint spanning trees, then the following holds:
\begin{itemize}
  \item[\rm (a)]
  $G$ has at least $2^{n/250}$ nowhere-zero $\ZZ_2\times \ZZ_2$-flows.
  \item[\rm (b)]
  $G$ has at least $3^{m-2n+2}$ nowhere-zero $\ZZ_2\times \ZZ_2$-flows.
\end{itemize}
\end{theorem}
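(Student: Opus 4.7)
Part (b) is a direct extension of Jaeger's construction that exploits additional freedom on edges outside two disjoint spanning trees. Fix disjoint spanning trees $T_1, T_2$ given by the hypothesis and let $E' = E(G) \setminus (E(T_1) \cup E(T_2))$, so $|E'| = m - 2n + 2$. For each function $\phi\colon E' \to (\ZZ_2 \times \ZZ_2) \setminus \{(0,0)\}$, I construct a $\ZZ_2 \times \ZZ_2$-flow $(f_1, f_2)$ by setting $f_1 \equiv 1$ on $E(T_2)$, $f_2 \equiv 1$ on $E(T_1)$, $(f_1(e), f_2(e)) = \phi(e)$ on $E'$, and extending each $f_i$ uniquely to $T_i$ via the $\ZZ_2$-flow condition. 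The pair is nowhere-zero (nonzero on $T_1$ because $f_2 = 1$, on $T_2$ because $f_1 = 1$, on $E'$ because $\phi$ is nonzero), and different $\phi$'s differ on $E'$ and thus yield different flows, producing $3^{m-2n+2}$ distinct nowhere-zero flows.

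For part (a), I plan a density dichotomy combining part (b) with Lemma~\ref{lemma-exppairs}. If $m - 2n + 2 \geq n/(250 \log_2 3)$, then part (b) alone already gives $3^{m-2n+2} \geq 2^{n/250}$ nowhere-zero flows, and we are done. Otherwise $G$ is sparse, and Lemma~\ref{lemma-exppairs} supplies $2^{n/12}$ distinct pairs of disjoint spanning trees. Applying the construction from part (b) to each such pair yields $2^{n/12} \cdot 3^{m-2n+2}$ instances of (pair, $\phi$), each producing a nowhere-zero flow; to conclude, I need to argue that at least $2^{n/250}$ distinct flows arise among them.

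The key technical step is a multiplicity bound. Given a flow $f = (f_1, f_2)$ with level sets $E_{ij} = f^{-1}((i,j))$, a triple $(T_1, T_2, \phi)$ produces $f$ via the part-(b) construction if and only if $E_{01} \subseteq T_1 \subseteq E_{01} \cup E_{11}$, $E_{10} \subseteq T_2 \subseteq E_{10} \cup E_{11}$, $T_1 \cap T_2 = \emptyset$, and $\phi = f|_{E'}$; hence the multiplicity equals the number of pairs of disjoint $S_1, S_2 \subseteq E_{11}$ for which $E_{01} \cup S_1$ and $E_{10} \cup S_2$ are spanning trees of $G$. Intuitively, in the sparse regime there is little slack for this ambiguity because $G$ is close to a union of two spanning trees. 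The main obstacle is making this precise: I plan to bound the number of spanning trees of $(V, E_{0j} \cup E_{11})$ containing the forest $E_{0j}$ by contracting $E_{0j}$ and counting spanning trees of the resulting small graph, aiming for a multiplicity of at most $2^{n/12 - n/250} \cdot 3^{m-2n+2}$. If this matroid-counting approach does not yield tight enough constants, the fallback is to use Lemma~\ref{lemma-noncannon} pair-by-pair (producing $2^{q(T_1)}$ flows per pair), whose multiplicity structure is somewhat cleaner because the canonical flow $f_1$ already determines $T_1$ up to containing $f_1^{-1}(0)$.
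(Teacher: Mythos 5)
Part (b) of your proposal is correct and coincides with the paper's argument (up to swapping the roles of the two coordinates), and your reduction of (a) to the case $m-2n+2 < n/(250\log_2 3)$ via (b) matches the paper's first step.

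The gap is in the rest of part (a), and it is not merely the detail you admit to be missing: the uniform multiplicity bound you aim for is false for the part-(b) construction. Take $G$ to be an $n$-cycle with every edge doubled, so $m=2n$ and you are in your sparse regime, and consider the flow $f\equiv(1,1)$ (legal, since all degrees are even). For \emph{every} pair $(T_1,T_2)$ of disjoint spanning trees, the construction with $\phi\equiv(1,1)$ on the two leftover edges forces $f_2=1$ on $T_1$, $f_1=1$ on $T_2$, $(1,1)$ on $E'$, and since the all-ones assignment is a $\ZZ_2$-flow, the unique extensions give $f_1\equiv f_2\equiv 1$; hence all of the at least $2^{n/12}$ pairs produce this one flow. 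So a single fiber can swallow essentially all triples, and no bound of the form $2^{n/12-n/250}\cdot 3^{m-2n+2}$ can hold; dividing $2^{n/12}\cdot 3^{m-2n+2}$ by the true maximal multiplicity then yields nothing. Your ``if and only if'' description of the fiber is also wrong in the unsound direction: $E_{01}\subseteq T_1$ is not necessary, because an edge with value $(0,1)$ may lie in $E'$ with $\phi(e)=(0,1)$; thus the quantity you propose to compute (pairs $S_1,S_2\subseteq E_{11}$ extending $E_{01}$, $E_{10}$) underestimates the true multiplicity, so bounding it would not give a valid upper bound. In the same example $E_{01}=E_{10}=\emptyset$, so contracting them does not produce a small graph, and the contracted graph has exponentially many spanning trees.

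The missing idea---which your fallback gestures at but does not develop---is the paper's dichotomy on the parameter $q$: the maximum, over spanning trees $T$ with $G-E(T)$ connected, of the number of edges of $T$ receiving value $1$ in the canonical $\ZZ_2$-flow with respect to $T$. If $q\ge n/250$, Lemma~\ref{lemma-noncannon} alone gives $2^{n/250}$ flows from a single tree. If $q< n/250$, one replaces the part-(b) construction by assigning to each pair $(T,T')$ the pair of \emph{canonical} flows $f_{T,T'}$ (value $1$ on all non-tree edges of the respective tree); then the zero set $Z_i$ of the $i$-th coordinate satisfies $Z_i\subseteq T_i$ and $|Z_i|\ge n-1-q$, so each flow arises from at most $\binom{m-(n-1-q)}{q}^2$ pairs, which for $m\le 2.005n-2$ is below $700^{n/125}$, and $2^{n/12}/700^{n/125}\ge 2^{n/250}$. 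Without this dichotomy there is no lower bound on the zero sets and hence no control of the fibers, as the doubled cycle shows.
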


\begin{proof}
Let $T_1$ and $T_2$ be disjoint spanning trees of $G$.
Statement (b) is straightforward: first we give values $(1,0)$ to the edges in $E(T_1)$, values $(0,1)$ on $E(T_2)$. Each edge in the complement of both spanning
trees can be given flow value $(1,0)$, $(0,1)$ or $(1,1)$ arbitrarily.
Then we extend the first coordinate value to a $\ZZ_2$-flow on~$G$ by modifying appropriately values on~$E(T_2)$, and similarly for the second coordinate and $E(T_1)$.
The combined $\ZZ_2\times \ZZ_2$-flow is always nowhere-zero, so this gives $3^{m-2n+2}$ different nowhere-zero
$\ZZ_2\times \ZZ_2$-flows in $G$.

In order to prove (a), we may assume that $m\le 2.005n-2$ since (b) can be applied otherwise.
Let $q$ be the maximum integer for which $G$ has a spanning tree $T$ such that $G-E(T)$ is connected and
$q$ edges of $T$ are assigned value $1$ in the canonical $\ZZ_2$-flow with respect to $T$.  If $q\ge n/250$, then
$G$ has at least $2^{n/250}$ nowhere-zero $\ZZ_2\times \ZZ_2$-flows by Lemma~\ref{lemma-noncannon}; hence, assume that $q < q_0 \colonequals n/250$.

Let $\TT$ be the set of all pairs of disjoint spanning trees of $G$; by Lemma~\ref{lemma-exppairs}, we have $|\TT|\ge 2^{n/12}$.
For each pair $(T,T')\in \TT$, let $f_{T,T'}$ be the nowhere-zero $\ZZ_2\times \ZZ_2$-flow in $G$ consisting of the canonical
$\ZZ_2$-flows with respect to~$T$ and~$T'$.

For a $\ZZ_2\times \ZZ_2$-flow~$(\phi_1,\phi_2)$ let us estimate $X(\phi_1,\phi_2)$, the number of pairs $(T_1,T_2)$ of trees with
$f_{T_1,T_2} = (\phi_1,\phi_2)$. For $i=1,2$, each tree~$T_i$ of such a pair must contain edges of the set
$Z_i = \{e : \phi_i(e) = 0\}$.  Since $|Z_i|\ge n-1-q$, there are at most
$
   \binom{m-|Z_i|}{n-1-|Z_i|} \le \binom{m-(n-1-q)}{q}
$
ways to choose~$T_i$ such that its canonical flow is~$\phi_i$.
It follows that $X(\phi_1, \phi_2)$ is at most
$$
   \binom{m-(n-1-q)}{q}^2\le \binom{1.01n}{q}^2 \le \left(\frac{1.01en}{q}\right)^{2q}\le \left(\frac{1.01en}{q_0}\right)^{2q_0}
   < 700^{n/125}.
$$
In the first inequality above we have used the bound on $q$ and on the number of edges of $G$, and in the second one we used the fact that
$\binom{x}{q} \le \bigl(\frac{ex}{q}\bigr)^{q}$ (where $e$ is the base of the natural logarithm).
Consequently, the number of nowhere-zero $\ZZ_2\times \ZZ_2$-flows $\{f_{T,T'}:(T,T')\in \TT\}$ is at least
$$
  \frac{2^{n/12}}{700^{n/125}} = 2^{n/12-\log_2(700)n/125}\ge 2^{n/250}.
$$
\end{proof}

As every $4$-edge-connected graph has two disjoint spanning trees, we get the following consequence.

\begin{corollary}
If $G$ is a $4$-edge-connected graph with $n$ vertices, then $G$ has at least\/ $2^{n/250}$ nowhere-zero $\ZZ_2^2$-flows.
\end{corollary}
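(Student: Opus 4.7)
The plan is to invoke Theorem~\ref{thm-4flows}(a) directly, so the only task is to verify that a $4$-edge-connected graph satisfies its hypothesis, namely the existence of two edge-disjoint spanning trees. This is a classical consequence of the Nash-Williams--Tutte theorem on packing spanning trees: a graph contains $k$ edge-disjoint spanning trees if and only if, for every partition $P$ of $V(G)$ into $r\ge 1$ nonempty parts, the number of edges with endpoints in distinct parts of $P$ is at least $k(r-1)$.

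To apply this with $k=2$, I would take an arbitrary partition of $V(G)$ into $r\ge 2$ parts and count cross-edges. Since $G$ is $4$-edge-connected, each of the $r$ parts sends at least $4$ edges across the partition, so the number of crossing edges is at least $\tfrac{1}{2}\cdot 4r = 2r$, which is larger than $2(r-1)$. Hence the Nash-Williams--Tutte condition holds with $k=2$, and $G$ has two edge-disjoint spanning trees. Theorem~\ref{thm-4flows}(a) then yields at least $2^{n/250}$ nowhere-zero $\ZZ_2^2$-flows, which is the desired conclusion.

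There is essentially no obstacle here; the statement is phrased as a corollary precisely because the main work was done in Section~\ref{sec:4flows}, and the only input needed is the well-known fact that $2k$-edge-connectivity implies the existence of $k$ edge-disjoint spanning trees. One could alternatively cite this specialization directly (it predates Nash-Williams--Tutte in its weaker forms and is standard folklore), avoiding even the short partition argument above.
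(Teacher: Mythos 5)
Your proposal is correct and follows exactly the paper's route: the paper likewise derives the corollary by noting that every $4$-edge-connected graph has two edge-disjoint spanning trees (the Nash-Williams--Tutte consequence you cite) and then applying Theorem~\ref{thm-4flows}(a); your added partition-counting verification of that classical fact is accurate but not needed beyond the citation.
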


\section{$\ZZ_3$-flows}\label{sec:3flows}

We view $\ZZ_3$-flows as orientations of the graph such that each vertex has the same indegree and outdegree modulo $3$.
We will use a result of L.\,M.\,Lov\'asz et al., who proved in~\cite{ltwz} that every 6-edge-connected graph admits a nowhere-zero $\ZZ_3$-flow.
To state their result precisely, we need to introduce
a generalization of flows on graphs and a delicate way to measure graph connectivity.

For a graph $G$, a function $\beta:V(G)\to \ZZ_3$ is a \emph{boundary} if the sum of its values is $0$.
A \emph{nowhere-zero $\beta$-flow} in a graph $G$ is an orientation of $G$ such that every $v\in V(G)$
satisfies $\deg^+(v)-\deg^-(v)\equiv \beta(v)\pmod 3$. In particular, if $\beta$~is identically~$0$, then
a nowhere-zero $\beta$-flow defines a nowhere-zero $\ZZ_3$-flow.  For a set $X\subseteq V(G)$, let $\beta(X)=\sum_{x\in X}\beta(x)$
and let $\deg(X)$ denote the number of edges of $G$ with exactly one end in $X$.  Let $\sigma(X)$ be defined as follows:

$$
  \sigma(X)=\begin{cases}
	4&\text{if $\beta(X)=0$ and $\deg(X)$ is even}\\
	7&\text{if $\beta(X)=0$ and $\deg(X)$ is odd}\\
	6&\text{if $\beta(X)\neq 0$ and $\deg(X)$ is even}\\
	5&\text{if $\beta(X)\neq 0$ and $\deg(X)$ is odd.}
  \end{cases}
$$
Note that we use $\sigma$ where the authors of~\cite{ltwz} write $4 + |\tau|$. 
The next result appears as Theorem~3.1 in~\cite{ltwz} (in a slightly stronger formulation, to aid the inductive proof there). 

\begin{theorem}[Lov{\'a}sz et al.~\cite{ltwz}]\label{thm-extend}
  Let $G$ be a graph, let $\beta$ be its boundary, and let $v$ be a vertex of $G$.
  Suppose that
  \begin{enumerate}
    \item $\deg(X)\ge\sigma(X)$ for all sets $X\subsetneq V(G)$ such that $v \in X$ and $|X| \ge 2$,
    \item $\deg(v)\le\sigma(\{v\})$.
  \end{enumerate}
  Then every orientation of edges incident with $v$ such that
  $\deg^+(v)-\deg^-(v)\equiv \beta(v)\pmod 3$ extends to a nowhere-zero $\beta$-flow in $G$.
\end{theorem}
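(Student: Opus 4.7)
I would prove Theorem~\ref{thm-extend} by induction on $|V(G)|+|E(G)|$, along the Thomassen-style framework developed in~\cite{Thomassen-3flow} and refined in~\cite{ltwz}. Fix the orientation at $v$ prescribed by the statement, so that $\deg^+(v)-\deg^-(v)\equiv \beta(v)\pmod 3$. The plan is to iteratively lift pairs of edges at $v$ (decreasing $\deg(v)$ by $2$ each time) until $v$ becomes isolated, at which point $\beta(v)=0$ is forced and the problem reduces to finding a nowhere-zero $\beta$-flow in $G-v$ via a further application of the theorem at some new distinguished vertex.

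The key move is: choose an in-arc $e_1=uv$ and an out-arc $e_2=vw$ at $v$ and replace them by a single arc $u\to w$. The orientation at $v$ in the lifted graph $G'$ still satisfies the congruence, and any nowhere-zero $\beta$-flow on $G'$ splits back into one on $G$ extending the original orientation. For the induction to close, conditions (1) and (2) must persist in $G'$. Since every constrained $X$ contains $v$, one verifies that $\deg_{G'}(X)=\deg_G(X)$ when at least one of $u,w$ lies in $X$, and $\deg_{G'}(X)=\deg_G(X)-2$ when neither does; the parity of $\deg(X)$ is preserved in either case, so $\sigma(X)$ and $\sigma(\{v\})$ are unchanged. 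Hence the pair $(e_1,e_2)$ is \emph{good} provided no tight cut $X\ni v$ (with $\deg_G(X)=\sigma_G(X)$) has both $u,w\notin X$, and condition~(2) automatically survives because $\deg_{G'}(v)=\deg_G(v)-2\le\sigma_{G'}(\{v\})$. Mader's Lemma~\ref{lemma-mader} provides a splittable pair at $v$; combining this with the orientation constraint (one in-arc and one out-arc) produces the candidate pairs.

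If no good pair exists at $v$, every in-arc and every out-arc at $v$ has its non-$v$ endpoint inside every tight cut through $v$, which forces a rigid structure on the tight cuts. I would then contract a minimal tight set $X\ni v$ (with $\deg_G(X)=\sigma_G(X)$), transferring the boundary value $\beta(X)$ to the contracted super-vertex, and re-enter the induction with the contracted graph, either with respect to $v$ (if $v$ remains outside the contracted side) or with respect to the super-vertex. Submodularity of the cut function $\deg$ together with uncrossing of tight sets guarantees the minimal tight $X$ is well-defined and that the new instance again satisfies (1) and (2). Small base cases are handled by inspection.

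\textbf{Main obstacle.} The delicate part is the extremal analysis when no good pair at $v$ is available, and the bookkeeping of $\sigma$ across contractions. Contracting a tight $X$ can switch $\beta(X)=0$ to $\beta(X)\ne 0$ (or vice versa) and alter the parity of $\deg(Y)$ for cuts $Y$ that cross $X$, thereby shifting $\sigma$ by up to~$3$; the four-valued definition of $\sigma$ is precisely what allows the induction to absorb these shifts. Showing that the slack $\deg(X)-\sigma(X)$ stays non-negative through every admissible sequence of lifts and contractions—and, in particular, that uncrossing tight cuts never produces a violation of~(1)—is the technical heart of the argument and the reason the statement is stated in the refined form given in~\cite{ltwz}.
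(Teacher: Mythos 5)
You should first note that the paper does not prove Theorem~\ref{thm-extend} at all: it is imported verbatim (in a slightly weakened form of Theorem~3.1 of~\cite{ltwz}) and used as a black box, so there is no in-paper argument to compare against. Your proposal therefore has to stand on its own as a proof of the Lov\'asz--Thomassen--Wu--Zhang theorem, and it does not: it is an outline of the known Thomassen-style strategy (induction via lifting and contraction of tight sets) in which the decisive steps are asserted rather than carried out — you yourself flag the ``no good pair'' analysis and the bookkeeping of $\sigma$ under contraction as the unproved technical heart, and in~\cite{ltwz} exactly that part occupies a long case analysis, performed with a deliberately strengthened induction statement chosen so that the induction closes; your sketch gives no substitute for either.

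Beyond the acknowledged gap, several concrete steps would fail as written. (i) After lifting an in-arc $uv$ and an out-arc $vw$ at $v$ into a pre-oriented arc $u\to w$, the induction hypothesis of Theorem~\ref{thm-extend} no longer applies: the theorem only allows prescribing orientations of edges incident with $v$, and a flow of the lifted graph that orients the new edge as $w\to u$ splits back into an orientation reversing the prescribed arcs at $v$. To fix the orientation of $uw$ you must delete it and shift $\beta(u)$ and $\beta(w)$, which decreases $\deg(X)$ by one for every cut separating $u$ from $w$ and changes both the parity of $\deg(X)$ and possibly whether $\beta(X)=0$; hence your claim that $\sigma$ is unchanged and that only tight cuts with $u,w\notin X$ are dangerous is not justified. (ii) The endgame ``lift until $v$ is isolated, then apply the theorem to $G-v$ at some new distinguished vertex'' is unsupported: condition~2 requires the distinguished vertex to have degree at most $\sigma(\{v\})\le 7$, and nothing guarantees $G-v$ has such a vertex, nor that condition~1 survives the cumulative loss of cut degree caused by the lifts (cuts through the new vertex were constrained in $G$ only with the edges to $v$ counted). (iii) In the remaining case you contract a ``minimal tight set'' and appeal to submodularity and uncrossing to claim the contracted instance again satisfies (1) and (2); but contraction merges $\beta$-values and changes $\deg$ and parity for crossing cuts, and verifying that no cut of the contracted graph violates $\deg\ge\sigma$ (and that the flows obtained on the two contracted pieces can be glued consistently) is precisely the content of the induction in~\cite{ltwz}. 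As it stands, the proposal is a plan for a proof, not a proof.
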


We will in fact use the following corollary: 

\begin{corollary}\label{cor:extend} 
  Let $G$ be a 6-edge-connected graph, let $\beta$ be its boundary, and let $v$ be a vertex of $G$.
  Suppose $\deg(v) \le 7$ and $\beta(v) = 0$. 
  Then every orientation of edges incident with $v$ such that
  $\deg^+(v)-\deg^-(v)\equiv \beta(v)\pmod 3$ extends to a nowhere-zero $\beta$-flow in $G$.
\end{corollary}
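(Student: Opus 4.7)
The plan is to verify the two hypotheses of Theorem~\ref{thm-extend} for the given vertex $v$. For condition~1, I would take any $X \subsetneq V(G)$ with $v \in X$ and $|X| \ge 2$. The 6-edge-connectivity of $G$ gives $\deg(X) \ge 6$, while the case definition of $\sigma$ gives $\sigma(X) \le 7$, with equality only when $\deg(X)$ is odd. Since any odd integer at least $6$ is already at least $7$, this forces $\deg(X) \ge \sigma(X)$ in all cases; otherwise $\sigma(X) \le 6 \le \deg(X)$.

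For condition~2, $6$-edge-connectivity combined with $\deg(v) \le 7$ forces $\deg(v) \in \{6, 7\}$. When $\deg(v) = 7$, the value is odd and $\beta(v) = 0$, so $\sigma(\{v\}) = 7 \ge \deg(v)$, and Theorem~\ref{thm-extend} applies directly to yield the required extension.

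The case $\deg(v) = 6$ will be the main obstacle, because there $\sigma(\{v\}) = 4 < \deg(v)$, so Theorem~\ref{thm-extend} does not apply as stated. I plan to handle it by reducing to a smaller instance via a complete splitting at $v$ in the sense of Mader: since $v$ has even degree at least $4$ and no incident cut-edge, its six edges can be paired so that simultaneous lifting produces a $6$-edge-connected graph $G'$ on $n-1$ vertices. One chooses the pairing to be consistent with the prescribed partial orientation --- matching in-edges with out-edges in the $(3,3)$ subcase, and accepting a compensating $\pm 1$ update of $\beta$ at the three endpoints of the new edges in the $(0,6)$ and $(6,0)$ subcases (whose total shift is $\equiv 0 \pmod 3$ since $\beta(v) = 0$, so the new assignment remains a valid boundary). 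Applying the corollary inductively to $G'$ with the updated boundary then yields a nowhere-zero $\beta'$-flow on $G'$, which reverses back across the splitting to the desired extension in $G$.
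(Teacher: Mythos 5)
Your verification of condition~1 and your treatment of the case $\deg(v)=7$ are correct and coincide with the paper's argument. The gap is in the case $\deg(v)=6$, where your splitting-off reduction does not prove the statement. The central problem is that the reduction loses the prescribed orientation: a nowhere-zero $\beta'$-flow of the split graph $G'$ is free to orient each lifted edge in either direction, and when you expand a lifted edge back into the path through $v$, both edges of the corresponding pair may come back with their prescribed directions reversed. The balance condition $\deg^+(v)-\deg^-(v)\equiv 0\pmod 3$ survives, but the specific orientation you were asked to extend need not; so at best you show that \emph{some} admissible orientation at $v$ extends, which is weaker than the corollary. You cannot repair this by prescribing the orientations of the three new edges in $G'$, since your inductive statement only controls edges incident with a single vertex, while the new edges may meet six distinct vertices. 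Two further steps are also unjustified: Lemma~\ref{lemma-mader} only guarantees that \emph{some} splittable pair exists, not that a complete splitting can be chosen to match in-edges with out-edges as your $(3,3)$ subcase requires; and the $(0,6)$/$(6,0)$ subcases are not coherent as described --- if you keep the lifted edges their orientations are uncontrolled as above, while if you instead delete the edges at $v$ and compensate $\beta$ at the neighbours, the resulting graph is essentially $G-v$, which need not be $6$-edge-connected, so the induction hypothesis does not apply.

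The paper avoids all of this with a one-line trick: when $\deg(v)=6$, pick one incident edge $vw$, replace it by its reversal, and shift the boundary by setting $\beta'(v)=\beta(v)-2$ and $\beta'(w)=\beta(w)+2$. Now $\beta'(v)\neq 0$ while $\deg(v)=6$ is even, so $\sigma(\{v\})=6\ge\deg(v)$ and condition~2 of Theorem~\ref{thm-extend} holds (condition~1 holds exactly as you argued). Theorem~\ref{thm-extend} then extends the given orientation at $v$ to a nowhere-zero $\beta'$-flow of the modified graph, and undoing the reversal of $vw$ turns it into a nowhere-zero $\beta$-flow of $G$ extending the original prescription --- no induction, no splitting, and no constraint on how the edges at $v$ are paired.
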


\begin{proof}
If $\deg_G(v) = 7$, we put $G' = G$ and $\beta' = \beta$. 
Otherwise (i.e., if $\deg_G(v) = 6$), we pick an edge~$vw$ in~$G$ (the case of an edge directed towards~$v$ is symmetric). 
Then we put $G' = G-vw+wv$, and define $\beta'(u) = \beta(u)$ for $u\ne v,w$, $\beta'(v) = \beta(v) - 2$, and 
$\beta'(w) = \beta(w) + 2$. 
With this choice, any nowhere-zero $\beta'$-flow in~$G'$ yields (after we change~$wv$ back to~$vw$) a nowhere-zero $\beta$-flow in~$G$. 

We will argue that a nowhere-zero $\beta'$-flow in $G'$ exists by Theorem~\ref{thm-extend}. 
Condition~1 follows from 6-edge-connectivity of~$G$. 
Condition~2 is true for both choices of~$\deg(v)$ (for this we needed to reverse an edge incident to~$v$ if $\deg(v) = 6$). 
\end{proof}

We say a graph is minimally $k$-edge-connected, if it is $k$-edge-connected, but deletion of any edge
creates a cut with $k-1$ edges. Mader~\cite{Mader_1971,Mader_1974} proved that such graph must have
many vertices of degree~$k$; we will use an improved bound due to Cai~\cite{Cai_1993}. 
We only state the result for case $k=6$, but there are versions for other values of~$k$. 

\begin{theorem}[Cai \cite{Cai_1993}] \label{thm:Cai}
  Suppose $G$ is a minimally $6$-edge-connected simple graph. 
  Then $G$ has at least $\tfrac{11}{30}|G| + \tfrac{85}{30}$ 
  vertices of degree~$6$. 
\end{theorem}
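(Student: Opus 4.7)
The plan is to proceed via the classical two-step argument for this kind of Mader-style bound: first establish that the subgraph induced by the high-degree vertices is a forest, then combine this sparsity with an edge-counting estimate and handle a small range of $n$ separately. Throughout, let $V_6$ denote the set of vertices of degree exactly $6$ and $V_{\ge 7}$ the set of vertices of degree at least $7$, so that $|V_6| + |V_{\ge 7}| = n$.

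The key structural ingredient is Mader's theorem that $G[V_{\ge 7}]$ is a forest. I would prove it by contradiction through the standard fragment/atom analysis for minimally $k$-edge-connected graphs. Suppose a cycle $C$ with $V(C) \subseteq V_{\ge 7}$ exists, and pick any $e = uv \in E(C)$. By minimality of the $6$-edge-connectivity, $G - e$ has a $5$-edge-cut, equivalently $G$ has a $6$-edge-cut $\partial X$ with $u \in X$, $v \notin X$, and $e \in \partial X$. Shrinking $X$ to an \emph{atom} $A$ (a minimum-cardinality fragment still containing $u$ and avoiding $v$), submodularity of $\deg(\cdot)$ makes $A$ connected, and a degree/parity argument on the cycle's passage through $\partial A$ --- using that the relevant vertices of $A$ have degree $\ge 7$ while only six edges of $G$ leave $A$ --- forces some vertex of $V(C)$ to have degree exactly $6$, contradicting $V(C) \subseteq V_{\ge 7}$. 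Simpleness of $G$ enters here to exclude parallel boundary edges that would otherwise spoil the count.

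Given the forest lemma, $|E(G[V_{\ge 7}])| \le |V_{\ge 7}| - 1$, so the number of edges from $V_{\ge 7}$ to $V_6$ is at least $7|V_{\ge 7}| - 2(|V_{\ge 7}|-1) = 5|V_{\ge 7}| + 2$, while being at most $6|V_6|$. Rearranging with $|V_{\ge 7}| = n - |V_6|$ yields $|V_6| \ge (5n+2)/11$, which already dominates $\tfrac{11}{30}|G| + \tfrac{85}{30}$ once $n$ is moderately large; for the remaining small values of $n$, a direct inspection (tightened by the simpleness hypothesis) covers the gap. The main obstacle is the structural lemma itself: setting up the atom/submodularity argument cleanly, and --- more delicately --- sharpening it so that the additive constant works out to exactly $\tfrac{85}{30}$. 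This quantitative improvement over Mader's weaker additive term is where Cai's refinement lives, via a more careful analysis of small-boundary fragments and a quantitative rather than merely qualitative use of the simpleness hypothesis.
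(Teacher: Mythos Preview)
The paper does not prove this theorem: it is quoted from Cai~\cite{Cai_1993} as a black box and invoked in Case~B of the proof of Theorem~\ref{thm:3flowexpmany}. So there is no paper proof to compare against.

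On its own merits, your sketch correctly reproduces Mader's argument: the forest lemma for $G[V_{\ge 7}]$ together with the edge count genuinely gives $|V_6| \ge (5n+2)/11$. But this only implies $\tfrac{11}{30}n + \tfrac{85}{30}$ once $n \ge 31$, and ``direct inspection'' of all minimally $6$-edge-connected simple graphs on at most $30$ vertices is not a feasible step---there is no finite list to check by hand. You yourself flag that Cai's contribution is precisely the sharpened additive constant via finer fragment analysis, and you do not carry that analysis out. So what you have written is a proof of Mader's weaker bound, together with an acknowledgment that Cai's refinement exists; it is not a proof of the stated theorem. (For the record, Mader's bound would nearly suffice for the paper's application---it would only shift the small-$n$ threshold in Case~3 slightly---but the paper as written does rely on Cai's constants to make the cutoff $n \ge 15$ work.)
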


The above theorem does not hold for a graph with multiple edges. To be able to use it for such graphs, we will use a simple 
construction to get rid of parallel edges. Let $u$ be a vertex of an undirected graph~$G$. 
We delete possible loops at~$u$, we subdivide every edge incident to~$u$ and we let
$X$~be the set of the \emph{new vertices}. Next, we put a clique on~$X$ and delete~$u$. 
We will say that the resulting graph~$G'$ was obtained from~$G$ by \emph{clique expansion} with \emph{center~$u$}. 
The following easy lemma will be crucial. 

\begin{lemma}\label{l:clique_expansion}
Let $u$ be a vertex of a $k$-edge-connected graph~$G$. Suppose $|V(G)|\ge 2$ and let $G'$ be obtained by clique expansion with center~$u$ as described above. 
Then $G'$ is also $k$-edge-connected. 
\end{lemma}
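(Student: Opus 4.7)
The plan is to verify $k$-edge-connectivity of $G'$ directly: given an arbitrary partition $(A,B)$ of $V(G')$ with both sides nonempty, I would show $|E_{G'}(A,B)|\ge k$. Recall that $V(G')=(V(G)\setminus\{u\})\cup X$, where $X$ is in bijection with the non-loop edges of $G$ incident to~$u$ (write $x_{uv}\in X$ for the subdivision vertex arising from an edge $uv$ of $G$); since the cut $(\{u\},V(G)\setminus\{u\})$ in $G$ already contains $|X|$ edges, the $k$-edge-connectivity of $G$ forces $|X|\ge k$.

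The argument splits according to how $X$ meets $A$ and $B$. When $X$ lies entirely in one part, say $X\subseteq A$ (the symmetric case is analogous), I would set $A'=(A\setminus X)\cup\{u\}$ and $B'=B$; these form a nontrivial partition of $V(G)$ (using $|V(G)|\ge 2$), and the crossing edges of $(A',B')$ in $G$ correspond one-to-one to those of $(A,B)$ in $G'$: edges within $V(G)\setminus\{u\}$ are unchanged, and each edge $uv\in E(G)$ with $v\in B'$ corresponds to the crossing edge $x_{uv}v$ of $G'$. Hence $|E_{G'}(A,B)|=|E_G(A',B')|\ge k$.

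Now suppose $a\colonequals|A\cap X|\ge 1$ and $b\colonequals|B\cap X|\ge 1$, so $a+b=|X|\ge k$ and the clique on $X$ alone contributes $ab$ edges to the cut. If $B\setminus X=\emptyset$ (the symmetric case is analogous), then the $G'$-cut consists of these $ab$ clique edges together with the edge $x_{uv}v$ for each $x_{uv}\in B\cap X$, yielding $ab+b=b(a+1)\ge a+b\ge k$, where the last inequality reduces to $a(b-1)\ge 0$. Otherwise both $A_V\colonequals A\setminus X$ and $B_V\colonequals B\setminus X$ are nonempty, and applying $k$-edge-connectivity of $G$ to the cut $(A_V\cup\{u\},B_V)$ gives $|E_G(A_V,B_V)|+|\{uv\in E(G):v\in B_V\}|\ge k$. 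All edges of $E_G(A_V,B_V)$ still cross $(A,B)$ in $G'$, and each edge $uv$ with $v\in B_V$ yields a crossing edge $x_{uv}v$ unless $x_{uv}\in B\cap X$; the number of exceptions is at most $b\le ab$, so the clique edges more than cover the deficit and $|E_{G'}(A,B)|\ge k$.

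The main subtlety lies in this last case: some edges that cross the natural $G$-cut at $u$ fail to produce $G'$-crossings because their subdivision vertices land on the ``wrong'' side of $X$. The bookkeeping hinges on the trivial but essential inequality $ab\ge b$, which ensures the clique edges always absorb the deficit; beyond that, each case reduces to a short direct count.
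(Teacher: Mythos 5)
Your proof is correct and is essentially the paper's own argument made explicit: the paper uncrosses a cut of $G'$ by moving all new vertices to one side, which cannot increase the cut size precisely because the $ab$ clique edges dominate the at most $b$ (resp.\ $a$) pendant edges that may newly cross --- the inequality $ab\ge b$ you isolate --- and then reads the result off as a cut of $G$. Your explicit case split also covers the degenerate situation $B\subseteq X$ (your count $b(a+1)\ge a+b\ge k$), which the paper's one-line proof passes over silently, but the underlying idea is the same.
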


\begin{proof}
  Consider an edge-cut $\delta(A)$ for some $A \subset V(G')$. If all new vertices are in~$A$ (or none of them is) then we have a cut of the same size in~$G$. 
  Otherwise, we may decrease the number of edges in~$\delta(A)$ by moving all of the new vertices to~$A$, or by moving all of them out of~$A$. 
\end{proof}

In a $6$-edge-connected graph, a pair $e,e'$ of edges incident with the same vertex $u$ will be called \emph{$6$-splittable at~$u$}
if the graph obtained by lifting $e$ and $e'$ from~$u$ has no edge cuts of size less than~$6$ possibly with the exception of the cut defined by~$u$.

\begin{theorem}
\label{thm:3flowexpmany}
  Every $6$-edge-connected graph $G$ with $n \ge 2$ vertices has at least~$2^{(n-2)/12}$ nowhere-zero $\ZZ_3$-flows. 
In fact, if $\Delta(G) \le 7$, then every orientation of edges incident with a given vertex $v \in V(G)$ such that $\deg^+(v)-\deg^-(v)\equiv 0\pmod 3$
extends to at least $2^{(n-2)/12}$ nowhere-zero $\ZZ_3$-flows in $G$. 
\end{theorem}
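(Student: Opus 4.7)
My plan is to prove the stronger second statement by induction on $n$; the first statement will then follow by iteratively applying Corollary~\ref{cor-mader2} at any vertex of degree at least $8$ to lift a splittable pair. Each such lift preserves 6-edge-connectivity and $n$, and any nowhere-zero $\ZZ_3$-flow of the lifted graph lifts uniquely to a flow of the original graph, so the flow count is non-decreasing under these reductions. Once $\Delta(G)\le 7$, invoking the second statement (for some vertex $v$ of degree at most $7$ and any compatible orientation of its incident edges) gives the required lower bound of $2^{(n-2)/12}$. The base case $n=2$ of the induction is immediate from Corollary~\ref{cor:extend}: any compatible orientation at $v$ extends to at least one nowhere-zero $\ZZ_3$-flow, matching $2^{0}=1$.

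For the inductive step with $n\ge 3$, I would pick a vertex $u\ne v$ of degree $6$. Since $\Delta(G)\le 7$ and $G$ is 6-edge-connected, every vertex has degree $6$ or $7$; the sub-case in which every vertex other than $v$ has degree $7$ is a rigid structural situation that requires a separate, direct treatment. At $u$ I apply a Mader/Frank-type complete admissible splitting: the six edges incident with $u$ are paired into three pairs, each pair is lifted to a single edge between the corresponding neighbours, and the now-isolated $u$ is deleted. Known splitting-off results guarantee that at least one such admissible splitting exists, producing a graph $G'$ that is 6-edge-connected, has $n-1$ vertices, and still satisfies $\Delta(G')\le 7$ (lifting at $u$ does not change the degree of any other vertex). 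By the inductive hypothesis applied to $G'$ (with the same $v$ and same orientation at $v$, preserved because $u\ne v$), there are at least $2^{(n-3)/12}$ flows of $G'$ extending the prescribed orientation at $v$, and each such flow lifts uniquely to a flow of $G$.

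The remaining multiplicative factor of $2^{1/12}\approx 1.06$ per inductive step is the main technical obstacle. My plan to recover it is to exhibit extending flows of $G$ that are not captured by a single fixed splitting at $u$. There are two natural sources: first, summing over all admissible complete splittings $M$ at $u$ via the double-count identity $\sum_{M}N(G'_M)\le 6\cdot F_3(u)$, where $F_3(u)$ counts the extending flows of $G$ with $\deg^+(u)=3$; this yields $F_3(u)\ge\tfrac{k}{6}\cdot 2^{(n-3)/12}$ when $u$ admits $k$ admissible splittings, closing the gap whenever $k\ge 7$. Second, including extending flows whose orientation at $u$ satisfies $\deg^+(u)\in\{0,6\}$, whose existence follows from Corollary~\ref{cor:extend} applied at $u$ together with a cycle-reversal adjustment supported in $G-v$ that realigns the orientation at~$v$. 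The hardest sub-case is when $u$ admits a unique admissible complete splitting and yields few such "extra" flows; I expect this to require invoking Theorem~\ref{thm:Cai} on a simple auxiliary graph produced via clique expansion (Lemma~\ref{l:clique_expansion}) together with an amortized counting argument that distributes one factor-$2$ gain across every twelve consecutive inductive steps.
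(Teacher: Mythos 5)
Your overall architecture (reduce to $\Delta\le 7$ via Corollary~\ref{cor-mader2}, then induct by splitting off a degree-$6$ vertex $u\ne v$ and recover the missing factor $2^{1/12}$ from ``extra'' flows) is the same skeleton as the paper's proof, but the two places where you defer the work are precisely where the actual content lies, and neither of your proposed rescues is viable as stated. First, your factor-recovery mechanism needs $k\ge 7$ admissible complete splittings at $u$, and nothing guarantees more than one (Mader/Frank give existence, not multiplicity); your fallback for ``few admissible splittings'' is only the sentence that you ``expect'' an amortized argument distributing one factor of $2$ over twelve steps, which would require a strengthened induction hypothesis carrying a potential function that you never formulate. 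The paper avoids this entirely by a dichotomy you are missing: either there are three edges $e_1,e_2,e_3$ at the degree-$6$ vertex all three of whose pairs are splittable -- in which case the three reduced graphs $G_{ij}$ each carry $\ge 2^{(n-3)/12}$ extending flows and each flow of $G$ is counted at most twice, giving the factor $3/2\ge 2^{1/12}$ -- or some pair is non-splittable, which produces a set $Y$ with $\deg(Y)\le 7$, and then a cut-decomposition step (contract $Y$ and its complement separately, use $g(n')\,g(n'')=g(n)$ with $n'+n''=n+2$) closes the induction. You have no analogue of this small-cut case, so your induction has no exit when splittings are scarce. Your ``second source'' of flows is also broken: Corollary~\ref{cor:extend} applied at $u$ gives no control over the orientation at $v$, and a cycle reversal \emph{supported in $G-v$} does not touch $\delta(v)$, so it cannot ``realign'' a flow that disagrees with the prescribed preorientation; moreover a bounded number of additional flows could never supply a multiplicative factor $2^{1/12}$ once $2^{(n-3)/12}$ is large.

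Second, the sub-case you set aside as ``a rigid structural situation requiring separate, direct treatment'' -- every vertex other than $v$ of degree $7$ -- is not a degenerate corner but the heart of the paper's argument: there one performs a clique expansion at $v$ (Lemma~\ref{l:clique_expansion}), takes a maximal set $F'$ of edges whose removal keeps $6$-edge-connectivity, and either $|F|\ge (n-2)/12$, in which case orienting $F$ arbitrarily and extending via Corollary~\ref{cor:extend} already yields $2^{|F|}\ge 2^{(n-2)/12}$ extensions, or $|F|$ is small and Theorem~\ref{thm:Cai} applied to the minimally $6$-edge-connected simple graph $G'-F'$ produces a degree-$6$ vertex of $G$ distinct from $v$, returning to the splitting case (this is also why the paper needs $n\ge 15$ there and handles $n\le 14$ separately, another case your induction does not address). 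You name Cai's theorem and clique expansion as tools you ``expect'' to need, but give no argument, so the proposal does not constitute a proof. A further small but real issue: when a lifted pair at $u$ contains an edge $uv$, the edge set at $v$ changes, so the preorientation must be transferred to the new edge and one must rule out lifting two parallel $uv$ edges (which would create a loop at $v$); the paper handles this explicitly using $\deg(v)\le 7$, while your claim that the orientation at $v$ is preserved ``because $u\ne v$'' glosses over it.
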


\begin{proof}
Put $g(n) = 2^{(n-2)/12}$. 
The first statement follows from the second one: for a general $6$-edge-connected graph we may start by 
a repeated application of Corollary~\ref{cor-mader2}, to ensure that $\Delta(G) \le 7$. 

We will prove the second statement by induction on~$n + |E(G)|$. If $n=2$ then there is at least one such extension 
  (more if we have loops on the other vertex). Next, suppose $n > 2$. 
We start by discussing several possible arguments for the induction step; at the end we argue that at least one of them applies. 

\paragraph{Case 1. Three ways to split.} Suppose, there is a vertex $s$ of degree 6 and edges $e_1$, $e_2$, $e_3$ incident to~$s$ such 
  that all three pairs $e_i$, $e_j$ are $6$-splittable at~$s$. Suppose further that $s \ne v$. 
  Let $G_{ij}$ for $1\le i<j\le 3$ be the graph obtained from $G$ by lifting the pair $e_i$ and $e_j$ at $s$,
  and then lifting another pair at $s$ chosen by Lemma~\ref{lemma-mader} and suppressing $s$, so that $G_{ij}$ is $6$-edge-connected with 
  one vertex less. If $s$ and $v$ are adjacent (possibly by several edges), we must be somewhat careful: after lifting a pair of edges including 
  an edge incident with~$v$, we orient the new edge consistently. The possible danger -- lifting edges $e$, $e'$ with the same ends $s$ and $v$ -- 
  cannot occur, because $\deg(v) \le 7$. 

  By the induction hypothesis, $G_{ij}$ has $n_{ij} \ge g(n-1)$ nowhere-zero $\ZZ_3$-flows extending the orientation of $\delta(v)$. 
  Each such flow $f_{ij}$ naturally corresponds to a nowhere-zero $\ZZ_3$-flow $f'_{ij}$ in $G$.  Suppose that a nowhere-zero $\ZZ_3$-flow $f$ of $G$ is equal
  to $f'_{12}$ for some nowhere-zero $\ZZ_3$-flow $f_{12}$ of $G_{12}$, as well as to $f'_{13}$ for some nowhere-zero
  $\ZZ_3$-flow $f_{13}$ of $G_{13}$.  By symmetry, we can in this case assume that $f$ orients $e_1$ towards~$v$
  and $e_2$ and $e_3$ away from~$v$.  However, this implies that there exists no nowhere-zero $\ZZ_3$-flow $f_{23}$ of $G_{23}$
  with $f=f'_{23}$.  Consequently, $G$ has at least $\frac{1}{2}(n_{12}+n_{13}+n_{23})\ge \frac 32 g(n-1) \ge g(n)$ nowhere-zero $\ZZ_3$-flows extending 
  (the orientation of) $\delta(v)$. 

\paragraph{Case 2. Small cut.} Suppose there is $Y \subseteq V(G)$ such that $\deg(Y) \le 7$ and $|Y|, |V(G)\setminus Y| \ge 2$. 
  We may assume $v \not\in Y$. 
  Let $G'$ be the graph obtained from~$G$ by identifying all vertices of $Y$ to a single vertex $y$ and removing all loops
  at $y$. Note that $G'$ has $n' = |V(G)\setminus Y|+1 \ge 2$ vertices, it is 6-edge-connected, and $n' < n$. 
  Thus, we may extend $\delta(v)$ to at least $g(n')$ nowhere-zero $\ZZ_3$-flows in~$G'$. 
  Next we let $G''$ be the graph obtained by identifying the complement of~$Y$ to a single vertex~$y''$. 
  Clearly, $G''$ has $n'' = |Y| + 1 \ge 2$ vertices, and it is 6-edge-connected. Moreover, $n'' < n$ and $n' + n'' = n + 2$. 
  For each $\ZZ_3$-flow we found in~$G'$ we orient $\delta(y'')$ in~$G''$ according to the orientation of~$\delta(y')$ in~$G'$. 
  Using induction assumption again, we get at least $g(n'')$ nowhere-zero $\ZZ_3$-flows in~$G''$ extending $\delta(y'')$, getting 
  together at least $g(n')\cdot g(n'') = g(n)$ nowhere-zero $\ZZ_3$-flows in~$G$ extending~$\delta(v)$. 

\paragraph{Case 3. $\mathbf{n \le 14}$} 
  We need just $g(n) \le 2$ extensions of~$\delta(v)$. 
  If $n=3$, then by our connectivity and degree assumption $G$ is a triangle with every edge of multiplicity 3 or 4. It is easy to check that 
  this graph has at least $2$ extensions of~$\delta(v)$. 
  In all other cases, there is a vertex $w \in N(v)$ and an edge $e \not\ni v,w$. 
  We start by choosing an orientation of~$e$, we want to extend both of them together with the given orientation of~$\delta(v)$ to the whole of~$G$.
  To this end we let $f$ be the edge $vw$ or $wv$ (or one of these edges, if there are several) and we put $G' = G - \{e,f\}$. 
  Next, define $\beta : V(G') \to \ZZ_3$ so that any $\beta$-flow on~$G'$ corresponds to a nowhere-zero $\ZZ_3$-flow on~$G$. (There is a unique such mapping~$\beta$, it is
  equal to~$\pm 1$ at vertices incident to~$e$ or~$f$ and to zero elsewhere.) We verify assumptions of Theorem~\ref{thm-extend}: In $G'$ we have $\beta(v) \ne 0$ and
  $\deg(v) \in \{5,6\}$, thus condition~2 is satisfied. Regarding condition~1, for every~$X$ as in the condition, we have $\deg_{G'}(X) \ge \deg_G(X)-2$. 
  We know that $|X|\ge 2$, assume also $|V(G)\setminus X| \ge 2$. If $\deg_G(X) \le 7$, we may use Case~2. Otherwise $\deg_{G'}(X) \ge 6$ and we have $\deg_{G'} \ge \sigma(X)$. 
  It remains to check the case when the complement of~$X$ is a single vertex.
  In this case either $\deg_{G'}(X) = \deg_G(X)-1 \ge 5$ and $\beta(X)\ne 0$ or 
  $\deg_{G'}(X) = \deg_G(X) \ge 6$ and $\beta(X) = 0$. In both of these cases, condition~1 is satisfied. 

\paragraph{Case 4. $G$ has a vertex~$s \ne v$ of degree~6} 
  If $n = 3$ we use Case 3; if $n>3$ and $s$ is connected to some~$x$ by at least three edges, we use Case~2 with $Y=\{x,s\}$. 
  Thus $s$ has at least three distinct neighbors $v_1$, $v_2$, $v_3$. Let $e_i$ be an edge connecting $s$ to~$v_i$. If Case~1 does not apply, then one of the liftings is
  not possible; we may assume $e_1$, $e_2$ is not a splittable pair. This means that there is a set $Y \subseteq V(G)$ such that $v_1, v_2 \in Y$, $s \notin Y$
  and $\deg(Y) \le 7$. The complement of $Y$ cannot be just~$\{s\}$, otherwise $Y$ is not a bad cut after the lifting; 
  moreover $|Y| \ge 2$. Thus, we can apply Case~2 argument to finish the proof.

\paragraph{Case 5. $G$ has a double edge not incident to~$v$} 
  Suppose vertices $x \ne v$ and $y \ne v$ of~$G$ are connected by at least two edges, $e$ and~$f$. 
  If $G-e$ is 6-edge-connected, we may find by induction assumption $g(n)$ flows in it that extend the given preorientation of~$\delta(v)$. 
  Each of these can be modified to a flow in~$G$: we change direction of~$f$ and return~$e$ with the same direction as~$f$. 
  So $G$ has an edge-cut of size~6 containing~$e$. Thus we are in Case~2, unless this cut is trivial. 
  This means that $x$ or~$y$ is a vertex of degree~6, and we are done by Case~4.

\bigskip

To summarize, if we cannot apply any of the induction steps above, all parallel edges in~$G$ are incident with~$v$ and $n \ge 15$. 
Next, we construct a simple graph~$G'$ by forgetting the preorientation of~$G$ and applying the clique expansion with center~$v$; 
let $X$ be the set of the new vertices. 
By Lemma~\ref{l:clique_expansion}, graph $G'$~is 6-edge-connected.  
Now we choose any maximal set $F' \subseteq E(G')$ such that $G'-F'$ is 6-edge-connected.  
Letting $F$ be the edges of~$F'$ that are not incident with~$X$, graph $G-F$ is 6-edge-connected as well. 
Note that if $|X| = 6$, the new vertices have all degree~$6$ and so $F = F'$. 
If $|X| = 7$, there is at most one edge of~$F'$ in~$\delta(X)$ and at most one incident with each vertex of~$X$, thus 
$|F| \ge |F'| - 4$. 

\paragraph{Case A: $|F| \ge (n-2)/12$} 

We may choose arbitrary orientation of edges in~$F$, define boundary $\beta$ by putting 
$\beta(u) = \deg^-_F(u) - \deg^+_F(u)$. 
By Corollary~\ref{cor:extend}, there is an extension of $\delta(v)$ to a $\beta$-flow on~$G-F$. 
Combining with the chosen orientation of~$F$ we get a nowhere-zero $\ZZ_3$-flow on~$G$ extending $\delta(v)$, 
altogether at least $2^{|F|} \ge g(n)$ such flows. 

\paragraph{Case B: $|F| < (n-2)/12$} 

Maximality of~$F'$ implies that $G'-F'$ is a minimally 6-edge-connected simple graph. 
By Theorem~\ref{thm:Cai} there are at least $t = \frac{11}{30}|G'| + \frac{85}{30}$ vertices of degree~$6$ in~$G'-F'$. 
Our aim is to get a vertex of degree~6 in~$V(G) \setminus \{v\}$ and finish the proof by Case~4. 
To this end, we will find a vertex of degree~$6$ in $G'-F'$ that is not incident with an edge in~$F'$ and is 
not one of the new vertices. 
If $|X| = 6$ we require $\frac{11}{30}(n+5) + \frac{85}{30} - 2|F| - 6 > 0$. If $|X|=7$ we need 
somewhat stronger $\frac{11}{30}(n+6) + \frac{85}{30} - 2(|F| + 4) > 0$. 
Both of these are valid for $n \ge 15$, which finishes the proof.
\end{proof}

We remark that an earlier version of the paper used a different way to finish the proof. 
Either a graph has a vertex of degree~6, or it has many edges. In the latter case, we applied Theorem~\ref{thm-extend} 
and a probabilistic argument to get $2^{\frac{(m-3n-1)^2}{384n}}$ nowhere-zero $\ZZ_3$-flows. 
In comparison with our present approach this was more technical and yielded a worse final result. 
We are grateful to the referee for suggesting the approach using minimal 6-edge-connected graphs.

\section*{Future work}

A natural task is to improve our bounds. Specifically, what are the largest constants $c_3$, $c_4$, $c_6$ such that
\begin{itemize}
  \item every $3$-edge-connected graph has at least $c_6^n$ nowhere-zero $\ZZ_2\times \ZZ_3$-flows?
  \item every $4$-edge-connected graph has at least $c_4^n$ nowhere-zero $\ZZ_2^2$-flows?
  \item every $6$-edge-connected graph has at least $c_3^n$ nowhere-zero $\ZZ_3$-flows?
\end{itemize}

Our argument for $\ZZ_3$-flows breaks for $\beta$-flows (one of the key steps is splitting a vertex). We would like to know, 
if an analogy of Theorem~\ref{thm:3flowexpmany} holds for $\beta$-flows. 

In another direction, it would be interesting to find whether there is a natural setting, where the number of nowhere-zero flows
is larger than a constant but smaller than an exponential.

\section*{Acknowledgements}
The motivation for this research was sparked during the inspiring workshop New Trends in Graph Coloring at BIRS (Banff, Alberta) in 2016.
We thank the anonymous referee for suggestions leading to a simplification of the proof of Theorem~\ref{thm:3flowexpmany} and 
to an improvement of the bound.

\bibliographystyle{acm}
\bibliography{flows}

\end{document}